\documentclass[pdflatex,sn-mathphys-num]{sn-jnl}% Math and Physical Sciences Numbered Reference Style
\usepackage{graphicx}%
\usepackage{multirow}%
\usepackage{amsmath,amssymb,amsfonts}%
\usepackage{amsthm}%
\usepackage{mathrsfs}%
\usepackage[title]{appendix}%
\usepackage{xcolor}%
\usepackage{textcomp}%
\usepackage{manyfoot}%
\usepackage{booktabs}%
\usepackage{algorithm}%
\usepackage{algorithmicx}%
\usepackage{algpseudocode}%
\usepackage{listings}%
\theoremstyle{thmstyleone}%
\newtheorem{theorem}{Theorem}%  meant for continuous numbers
\newtheorem{proposition}[theorem]{Proposition}% 

\theoremstyle{thmstyletwo}%
\newtheorem{example}{Example}%
\newtheorem{remark}{Remark}%

\theoremstyle{thmstylethree}%
\newtheorem{definition}{Definition}%
\newtheorem{corollary}[theorem]{Corollary}
\newtheorem{lemma}[theorem]{Lemma}
\begin{document}

\title[Birkhoff-Orthogonal Takahashi--von Neumann--Jordan Type Constants in Banach Spaces]{Birkhoff-Orthogonal Takahashi--von Neumann--Jordan Type Constants in Banach Spaces}

%%=============================================================%%
%% GivenName	-> \fnm{Joergen W.}
%% Particle	-> \spfx{van der} -> surname prefix
%% FamilyName	-> \sur{Ploeg}
%% Suffix	-> \sfx{IV}
%% \author*[1,2]{\fnm{Joergen W.} \spfx{van der} \sur{Ploeg} 
%%  \sfx{IV}}\email{iauthor@gmail.com}
%%=============================================================%%

\author[1]{\fnm{Junxiang} \sur{Qi}}\email{y25060009@stu.aqnu.edu.cn}

\author[1]{\fnm{Zhouping} \sur{Yin}}\email{yzp@aqnu.edu.cn}
\equalcont{These authors contributed equally to this work.}

\author*[1]{\fnm{Qi} \sur{Liu}}\email{liuq67@aqnu.edu.cn}
\equalcont{These authors contributed equally to this work.}

\affil*[1]{\orgdiv{School of Mathematics and Statistics}, \orgname{Anqing Normal University}, \orgaddress{\city{Anqing}, \postcode{246133}, \country{P. R. China}}}

%%==================================%%
%% Sample for unstructured abstract %%
%%==================================%%

\abstract{In this paper, we introduce a new family of geometric constants \(C_t^B(X)\) of Birkhoff-orthogonal Takahashi-von Neumann-Jordan type for real Banach spaces. We discuss their basic properties and give a characterization of uniformly non-square spaces via \(C_t^B(X)\). Some sufficient conditions for super-reflexivity and fixed point property are presented. We establish upper bounds of \(C_t^B(X)\) related to orthogonal geometric moduli and constants, and obtain the sharp value for Radon planes. }

\keywords{ Birkhoff-James orthogonality, von Neumann--Jordan type constant, Banach space, Radon plane, modulus of smoothness}

%%\pacs[JEL Classification]{D8, H51}

\pacs[MSC Classification]{46B20,46C15}

\maketitle

\section{Introduction}

Geometric constants of Banach spaces measure how far a norm is from the Euclidean norm.  Classical examples include the James non-square constant, the von Neumann--Jordan constant, the Zb\u{a}ganu constant, the modulus of convexity and the modulus of smoothness.  Their importance comes from the fact that analytic properties such as uniform non-squareness, normal structure, super-reflexivity, and fixed point properties can often be expressed by sharp numerical inequalities.Readers interested in further details may consult\cite{Clarkson1936,KatoMaligrandaTakahashi2001,TakahashiKato2007,BarontiCasiniPapini2000,Pisier2016,
	HeRaoWangLiuLi2026,LiuWang2026,AlonsoMartinPapini2026,IfronikaGunawanLindiarni2026}.

For a Banach space $X$ and $-\infty\leq t<\infty$, Takahashi introduced a scale of James type constants $J_{X,t}(\tau)$\cite{Takahashi2006}, $\tau\geq0$, and the associated von Neumann--Jordan type constant
\[
C_t(X)=\sup_{0\leq\tau\leq1}\frac{J_{X,t}(\tau)^2}{1+\tau^2}.
\]
This scale unifies several well-known quantities.  For instance, $J_{X,-\infty}(1)$ is the James constant\cite{James1964}, $C_2(X)$ is the usual von Neumann--Jordan constant\cite{2}, and $C_0(X)$ is the Zb\u{a}ganu constant\cite{3}.  A key feature of Takahashi's scale is the equivalence between uniform non-squareness and $C_t(X)<2$.

The present paper inserts Birkhoff-James orthogonality into this Takahashi scale.  Recall that for $x,y\in X$, one says that $x$ is Birkhoff-James orthogonal to $y$, written $x\perp_B y$\cite{Birkhoff1935}, if
\[
\|x+\lambda y\|\geq \|x\|\qquad (\lambda \in R).
\]
This notion is generally neither symmetric nor additive, but it is one of the most natural orthogonality relations in normed spaces.  In a Hilbert space it coincides with inner-product orthogonality.  Orthogonal versions of the James constant and related triangle constants have recently proved useful in the study of uniform non-squareness and Radon planes; see Baronti--Papini\cite{PapiniBaronti2022} and subsequent work by Du, Li \cite{DuLi2025}and collaborators.  Recent Heinz-mean constants\cite{PalChandok2025} under the Birkhoff restriction show that the interaction between means and orthogonality yields sharper geometric information than unrestricted constants.

Our main definition is the following.  For a Birkhoff-orthogonal pair $x,y\in S_X$, replace the two lengths $\|x+\tau y\|$ and $\|x-\tau y\|$ by their power mean.  Taking the supremum over all such orthogonal pairs gives $\mathsf{J}^{B}_{X,t}(\tau)$; normalizing by $1+\tau^2$ gives $\mathsf{C}^{B}_t(X)$.  Thus $\mathsf{C}^{B}_t(X)$ is an orthogonal, Takahashi-type, von Neumann--Jordan constant.  

This constant possesses three merits: it complies with the Euclidean Pythagorean normalization owing to the identity $\mathsf{C}^{B}_t(H)=1$ valid in Hilbert spaces, it sharply characterizes uniform non-squareness as $\mathsf{C}^{B}_t(X)$ equals exactly 2 for spaces failing to be uniformly non-square, and it enjoys great flexibility with respect to parameter $t$, which enables the transition from the minimum-type orthogonal James constant to constants defined via arithmetic, quadratic and geometric means.

The paper is organized as follows.  Section~\ref{sec:prelim} records basic facts on power means, Birkhoff-James orthogonality and orthogonal geometric constants.  Section~\ref{sec:def} introduces $\mathsf{J}^{B}_{X,t}(\tau)$ and $\mathsf{C}^{B}_t(X)$ and proves elementary properties.  Section~\ref{sec:radon} treats Radon planes and obtains the sharp value $5/4$ for affine regular hexagons when $t\leq1$. 

\section{Preliminaries}\label{sec:prelim}

Throughout the paper $X$ is a real Banach space with unit sphere $S_X$ and unit ball $B_X$.  Unless explicitly stated, $\dim X\geq2$.  For $a,b>0$ and $-\infty\leq t<\infty$, put
$$\mathsf{M}_t(a,b)=\left(\frac{a^t+b^t}{2}\right)^{1/t}.$$

For fixed positive real numbers \(a,b\in(0,\infty)\), we define a function \(\mathsf{M}_t\) mapping the extended real line \([-\infty,+\infty)\) into the positive interval \((0,\infty)\) as follows\cite{RaoLiu2026}:\[
\mathsf{M}_t(a,b)=
\begin{cases}
	\left(\dfrac{a^t+b^t}{2}\right)^{1/t}, & -\infty<t<\infty,\ t\neq0,\\[1.1em]
	\sqrt{ab}, & t=0,\\[0.4em]
	\min\{a,b\}, & t=-\infty.
\end{cases}
\]
The usual power-mean monotonicity says that if $-\infty\leq s\leq t<\infty$, then
\begin{equation}\label{eq:power-monotone}
	\mathsf{M}_s(a,b)\leq \mathsf{M}_t(a,b)\leq \max\{a,b\}.
\end{equation}
Moreover, $\mathsf{M}_t$ is increasing in each variable.

The following elementary observations will be used repeatedly.

\begin{lemma}\label{lem:anchor}
	Let $x,y\in S_X$ and $x\perp_B y$.  Then, for every $\tau\in[0,1]$,
	\begin{equation}\label{eq:anchor-bounds}
		1\leq \|x\pm\tau y\|\leq 1+\tau.
	\end{equation}
	Furthermore,
	\begin{equation}\label{eq:convex-interpolation}
		\|x\pm\tau y\|\leq (1-\tau)+\tau\|x\pm y\|.
	\end{equation}
\end{lemma}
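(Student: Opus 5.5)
The plan is to derive both inequalities directly from the definition of Birkhoff--James orthogonality, the triangle inequality, and the homogeneity of the norm. No earlier results are needed beyond the definition of $x\perp_B y$.

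\emph{Lower bound in \eqref{eq:anchor-bounds}.} Since $x\perp_B y$ means $\|x+\lambda y\|\geq\|x\|$ for every real $\lambda$, we choose $\lambda=\tau$ and $\lambda=-\tau$. This gives $\|x\pm\tau y\|\geq\|x\|=1$ at once.

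\emph{Upper bound in \eqref{eq:anchor-bounds}.} The triangle inequality together with $\|x\|=\|y\|=1$ gives $\|x\pm\tau y\|\leq\|x\|+\tau\|y\|=1+\tau$ for $\tau\geq0$.

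\emph{Interpolation inequality \eqref{eq:convex-interpolation}.} For $\tau\in[0,1]$ we write $x\pm\tau y$ as a convex combination:
\[
x\pm\tau y=(1-\tau)x+\tau(x\pm y).
\]
Applying the triangle inequality and positive homogeneity, using $1-\tau\geq0$ and $\tau\geq0$, gives
\[
\|x\pm\tau y\|\leq(1-\tau)\|x\|+\tau\|x\pm y\|=(1-\tau)+\tau\|x\pm y\|.
\]
Equivalently, this is convexity of $\tau\mapsto\|x\pm\tau y\|$ evaluated between the endpoints $\tau=0$ and $\tau=1$.

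None of these steps is a genuine obstacle. The only care needed is the sign bookkeeping: each inequality must hold for both choices of sign, and the restriction $\tau\in[0,1]$ is exactly what keeps both coefficients in the convex combination nonnegative. Orthogonality is used only for the lower bound; the upper bound and \eqref{eq:convex-interpolation} hold for arbitrary $x,y\in S_X$.
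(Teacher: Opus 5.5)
Your proof is correct and matches the paper's argument: the lower bound comes from the Birkhoff condition with $\lambda=\pm\tau$, the upper bound from the triangle inequality, and the interpolation from the convex combination $x\pm\tau y=(1-\tau)x+\tau(x\pm y)$. Your added remark that only the lower bound uses orthogonality is also accurate.
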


\begin{proof}
	Since $x\perp_B y$, the inequality $\|x+\lambda y\|\geq1$ holds for every real $\lambda$, and hence for $\lambda=\pm\tau$.  The upper estimate in \eqref{eq:anchor-bounds} follows from the triangle inequality.  Finally,
	\[
	x\pm\tau y=(1-\tau)x+\tau(x\pm y),
	\]
	so \eqref{eq:convex-interpolation} follows from convexity of the norm.
\end{proof}

We shall use the following Birkhoff versions of two classical constants:

\begin{align}
	\mathsf{J}_{B}(X)&=\sup\{\min(\|x+y\|,\|x-y\|):x,y\in S_X, x\perp_B y\},\\
	\mathsf{A}_{2}^{B}(X)&=\sup\left\{\frac{\|x+y\|+\|x-y\|}{2}:x,y\in S_X,\ x\perp_B y\right\}.
\end{align}

The first one is the Birkhoff orthogonal James constant of Baronti and Papini\cite{PapiniBaronti2022}.  The second one is the Birkhoff orthogonal arithmetic triangle constant studied in connection with orthogonal moduli of convexity and smoothness\cite{DuLiangLi2025}.  Since $x \perp_B y$ implies $x\perp_B(-y)$, these constants are symmetric in the two signs $+y$ and $-y$, although Birkhoff orthogonality itself is not symmetric in $x$ and $y$.

The Birkhoff-orthogonal Takahashi-von Neumann-Jordan type constants investigated in this paper generalize the two classical constants mentioned above and Integrate the constants into the orthogonal framework.

A central fact, due to Papini and Baronti\cite{PapiniBaronti2022}, is
\begin{equation}\label{eq:PB-theorem}
	\mathsf{J}_{B}(X)=2\quad\Longleftrightarrow\quad X\text{ is not uniformly non-square}.
\end{equation}
Recall that $X$ is uniformly non-square if there exists $\delta>0$ such that for every $x,y\in S_X$ at least one of the inequalities \cite{James1964}
\[
\frac{\|x+y\|}{2}\leq 1-\delta,
\qquad
\frac{\|x-y\|}{2}\leq 1-\delta
\]
holds.

\section{The Birkhoff-Orthogonal Takahashi Constant}\label{sec:def}

We now introduce the main object of the paper.

\begin{definition}\label{def:new-constant}
	Let $X$ be a real Banach space, let $-\infty\leq t<\infty$, and let $\tau\geq0$.  The \emph{Birkhoff-orthogonal James type constant} of $X$ is
	\[
	\mathsf{J}^{B}_{X,t}(\tau)=\sup\left\{\mathsf{M}_t(\|x+\tau y\|,\|x-\tau y\|):x,y\in S_X,\ x\perp_B y\right\}.
	\]
	The associated \emph{Birkhoff-orthogonal Takahashi--von Neumann--Jordan type constant} is
	\begin{equation}\label{eq:CB-def}
		\mathsf{C}^{B}_t(X)=\sup_{0\leq\tau\leq1}\frac{\mathsf{J}^{B}_{X,t}(\tau)^2}{1+\tau^2}.
	\end{equation}
\end{definition}

\begin{remark}
	Specially,
	
	(i)If $t=1$, $\tau=1$,then
	$$\mathsf{J}^{B}_{X,1}(1)=\sup \left\{\frac{\|x+y\|+\|x-y\|}{2}: x, y \in S_X, x \perp_B y\right\}=\mathsf{A}_{2}^{B}(X) .$$
	
	(ii)If  \(t=-\infty,\tau=1\),then
	$$\mathsf{J}^{B}_{X,-\infty}(1)=\sup\{\min(\|x+y\|,\|x-y\|):x,y\in S_X, x\perp_B  y\}=\mathsf{J}_{B}(X)$$

\end{remark}
For $t=2$, the value $\mathsf{J}^{B}_{X,2}(1)^2$ is the supremum of the orthogonal parallelogram expression
\[
\frac{\|x+y\|^2+\|x-y\|^2}{2},\qquad x,y\in S_X,\\ x\perp_B  y.
\]
Thus $\mathsf{C}^{B}_2(X)$ may be interpreted as a Birkhoff-orthogonal von Neumann--Jordan type constant.  For $t=0$ it is a Birkhoff-orthogonal Zb\u{a}ganu-type constant, and for $t=-\infty$ it is governed by the minimum of the two orthogonal chord lengths.

\begin{proposition}\label{prop:basic}
	For every real Banach space $X$ and every $-\infty\leq t<\infty$,
	\begin{equation}\label{eq:basic-bounds}
		1\leq \mathsf{C}^{B}_t(X)\leq 2.
	\end{equation}
	Moreover:
	\begin{enumerate}
		\item $\mathsf{J}^{B}_{X,t}(0)=1$ and $1\leq \mathsf{J}^{B}_{X,t}(\tau)\leq1+\tau$ for $0\leq\tau\leq1$;
		\item if $-\infty\leq s\leq t<\infty$, then $\mathsf{J}^{B}_{X,s}(\tau)\leq \mathsf{J}^{B}_{X,t}(\tau)$ for every $\tau\geq0$, and consequently $\mathsf{C}^{B}_s(X)\leq\mathsf{C}^{B}_t(X)$;
		\item if $Y$ is a closed subspace of $X$, then $\mathsf{C}^{B}_t(Y)\leq\mathsf{C}^{B}_t(X)$;
		\item $\mathsf{C}^{B}_t(X)\leq C_t(X)$, where $C_t(X)$ is Takahashi's unrestricted constant.
	\end{enumerate}
\end{proposition}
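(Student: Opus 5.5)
The plan is to establish item 1 first and then read off the rest, mostly from Lemma~\ref{lem:anchor} and the power-mean facts \eqref{eq:power-monotone}.

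\textbf{Item 1.} Fix a Birkhoff-orthogonal pair $x,y\in S_X$ with $x\perp_B y$; such a pair exists because $\dim X\geq2$. This is the standard consequence of Hahn--Banach: take a norming functional $f$ for $x$ and any nonzero $y\in\ker f$, normalized. For $\tau\in[0,1]$, Lemma~\ref{lem:anchor} gives $1\leq\|x\pm\tau y\|\leq1+\tau$. Since $\mathsf{M}_t$ is increasing in each variable and $\mathsf{M}_t(c,c)=c$, we get $1\leq\mathsf{M}_t(\|x+\tau y\|,\|x-\tau y\|)\leq1+\tau$. Taking the supremum over all such pairs yields $1\leq\mathsf{J}^{B}_{X,t}(\tau)\leq1+\tau$. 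At $\tau=0$ both norms equal $1$, so $\mathsf{J}^{B}_{X,t}(0)=1$.

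\textbf{The basic bounds \eqref{eq:basic-bounds}.} The lower bound follows by evaluating \eqref{eq:CB-def} at $\tau=0$. For the upper bound, item 1 gives
\[
\frac{\mathsf{J}^{B}_{X,t}(\tau)^2}{1+\tau^2}\leq\frac{(1+\tau)^2}{1+\tau^2}\leq 2,
\]
since $(1+\tau)^2\leq2(1+\tau^2)$ is equivalent to $(1-\tau)^2\geq0$.

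\textbf{Item 2.} Apply \eqref{eq:power-monotone} pointwise for each admissible pair, take suprema to compare $\mathsf{J}^{B}_{X,s}(\tau)$ with $\mathsf{J}^{B}_{X,t}(\tau)$, then square (both sides are positive), divide by $1+\tau^2$, and take the supremum over $\tau$.

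\textbf{Item 3.} The key observation is that Birkhoff orthogonality in $Y$ is the same relation as in $X$, because it is defined through the norm alone and $Y$ carries the restricted norm. Also $S_Y\subset S_X$. So the admissible set of pairs for $Y$ is contained in that for $X$, which gives $\mathsf{J}^{B}_{Y,t}(\tau)\leq\mathsf{J}^{B}_{X,t}(\tau)$ for each $\tau$; then pass to the supremum. If $\dim Y=1$, the only orthogonal pairs in $Y$ would be degenerate. I would either note that the paper assumes $\dim\geq2$ throughout, or observe that the inequality still holds in the sense that $\mathsf{J}^{B}_{Y,t}\leq$ the corresponding quantity for $X$.

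\textbf{Item 4.} Here I need Takahashi's definition of $J_{X,t}(\tau)$. I would recall it as
\[
J_{X,t}(\tau)=\sup\{\mathsf{M}_t(\|x+\tau y\|,\|x-\tau y\|):x,y\in S_X\}.
\]
The Birkhoff restriction only shrinks the set over which the supremum is taken, so $\mathsf{J}^{B}_{X,t}(\tau)\leq J_{X,t}(\tau)$ for every $\tau$, and the inequality passes to the $C$-constants. The main (mild) obstacle is making sure Takahashi's $J_{X,t}$ is indeed this unrestricted supremum over $S_X\times S_X$, with the same normalization and the same range $\tau\in[0,1]$. I would state that recollection explicitly when writing the proof.
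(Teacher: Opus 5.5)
Your proposal is correct and follows essentially the same route as the paper. Both use Lemma~\ref{lem:anchor} with the monotonicity of $\mathsf{M}_t$ for item (a) and the bounds \eqref{eq:basic-bounds}, power-mean monotonicity for (b), and inclusion of the admissible pair sets for (c) and (d). Your Hahn--Banach remark, which guarantees that a Birkhoff-orthogonal pair exists so the supremum is not over an empty set, makes explicit a point the paper leaves implicit.
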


\begin{proof}
	Part (a) follows immediately from  Lemma \ref{lem:anchor} and the monotonicity of $\mathsf{M}_t$ in each variable.  Since $\tau=0$ is allowed in \eqref{eq:CB-def}, the lower bound $\mathsf{C}^{B}_t(X)\geq1$ follows.  The upper bound follows from
	\[
	\frac{\mathsf{J}^{B}_{X,t}(\tau)^2}{1+\tau^2}\leq \frac{(1+\tau)^2}{1+\tau^2}\leq2,
	\]
	where equality in the last scalar inequality is possible only at $\tau=1$.
	
	Part (b) is exactly the monotonicity of power means.  For (c), observe that if $x,y\in S_Y$ and $x\perp_B y$ in $Y$, then the same inequality $\|x+\lambda y\|\geq\|x\|$ holds in $X$, because $Y$ carries the inherited norm.  Thus the admissible pairs for $Y$ are among the admissible pairs for $X$.  Finally, (d) follows because the Birkhoff-orthogonal pairs form a subset of all pairs in $S_X\times S_X$.
\end{proof}

\begin{proposition}\label{prop:hilbert}
	If $H$ is a Hilbert space, then
	\[
	\mathsf{J}^{B}_{H,t}(\tau)=\sqrt{1+\tau^2}
	\quad (\tau\geq0,
	\ -\infty\leq t<\infty),
	\]
	and therefore
	\[
	\mathsf{C}^{B}_t(H)=1.
	\]
\end{proposition}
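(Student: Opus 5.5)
The plan is to use the fact that in a Hilbert space Birkhoff--James orthogonality coincides with inner-product orthogonality. Then the Pythagorean identity makes the two chord lengths equal, and every power mean of two equal numbers is that number.

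First I will verify that $x\perp_B y$ in $H$ is equivalent to $\langle x,y\rangle=0$ for $x,y\in S_H$. Expanding gives
\[
\|x+\lambda y\|^2-\|x\|^2=2\lambda\langle x,y\rangle+\lambda^2\|y\|^2 .
\]
This quantity is nonnegative for all real $\lambda$ if and only if $\langle x,y\rangle=0$. Indeed, if $\langle x,y\rangle\neq0$, taking $\lambda=-\langle x,y\rangle/\|y\|^2$ makes it equal to $-\langle x,y\rangle^2/\|y\|^2<0$.

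Next, for any admissible pair $x,y\in S_H$ with $x\perp_B y$, I compute
\[
\|x\pm\tau y\|^2=\|x\|^2\pm2\tau\langle x,y\rangle+\tau^2\|y\|^2=1+\tau^2 .
\]
So $\|x+\tau y\|=\|x-\tau y\|=\sqrt{1+\tau^2}$. Since $\mathsf{M}_t(a,a)=a$ for every $-\infty\leq t<\infty$ (this is immediate in each of the three cases of the definition), every admissible pair contributes exactly $\sqrt{1+\tau^2}$. It remains to check that the set of admissible pairs is nonempty. Because $\dim H\geq2$, I can choose orthonormal vectors $x,y$. Hence the supremum equals $\sqrt{1+\tau^2}$, which gives $\mathsf{J}^{B}_{H,t}(\tau)=\sqrt{1+\tau^2}$ for all $\tau\geq0$.

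Finally, substituting into \eqref{eq:CB-def} gives $\mathsf{J}^{B}_{H,t}(\tau)^2/(1+\tau^2)=1$ for every $\tau\in[0,1]$, so $\mathsf{C}^{B}_t(H)=1$.

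There is no real obstacle in this argument. The only points needing care are the equivalence of Birkhoff and inner-product orthogonality, and the nonemptiness of the admissible set, which relies on the standing assumption $\dim X\geq2$.
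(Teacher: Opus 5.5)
Your proof is correct and follows the paper's argument: identify $x\perp_B y$ with $\langle x,y\rangle=0$, apply the Pythagorean identity to get $\|x\pm\tau y\|=\sqrt{1+\tau^2}$, and use $\mathsf{M}_t(a,a)=a$. You also verify the orthogonality equivalence and the nonemptiness of the admissible set (via $\dim H\geq 2$), both of which the paper leaves implicit.
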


\begin{proof}
	In a Hilbert space, $x\perp_B y$ is equivalent to $\langle x,y\rangle=0$.  Hence, for $x,y\in S_H$ with $x\perp_B y$,
	\[
	\|x+\tau y\|=\|x-\tau y\|=\sqrt{1+\tau^2}.
	\]
	All power means of two equal positive numbers are equal to that common value.  The conclusion follows from the definition.
\end{proof}

The preceding result explains the normalization in \eqref{eq:CB-def}.  The constant is equal to one in Hilbert spaces, while its maximum possible value is two.

\begin{example}\label{ex:linfty2}
	Let $X=\ell_\infty^2$.  Put $x=(1,1)$ and $y=(-1,1)$.  Then $x,y\in S_X$ and
	\[
	\|x+\lambda y\|_\infty=\max\{|1-\lambda|,|1+\lambda|\}\geq1,
	\]
	so $x\perp_B y$.  For $0\leq\tau\leq1$,
	\[
	\|x+\tau y\|_\infty=\|x-\tau y\|_\infty=1+\tau.
	\]
	Together with Proposition \ref{prop:basic}, this gives
	\[
	\mathsf{J}^{B}_{\ell_\infty^2,t}(\tau)=1+\tau,
	\qquad
	\mathsf{C}^{B}_t(\ell_\infty^2)=2
	\]
	for every $-\infty\leq t<\infty$.
\end{example}

\begin{example}\label{thm:linfty-sum}
	Let $Y$ and $Z$ be nonzero Banach spaces and let $X=Y\oplus_\infty Z$.  Then
	\[
	\mathsf{C}^{B}_t(X)=2
	\qquad(-\infty\leq t<\infty).
	\]
\end{example}

\begin{proof}
	Choose $u\in S_Y$ and $v\in S_Z$.  Put $x=(u,v)$ and $y=(u,-v)$ in $X$.  Then $x,y\in S_X$ and for every $\lambda\in R$,
	\[
	\|x+\lambda y\|=\max\{|1+\lambda|\|u\|,|1-\lambda|\|v\|\}
	=\max\{|1+\lambda|,|1-\lambda|\}\geq1.
	\]
	Thus $x\perp_B y$.  For $0\leq\tau\leq1$,
	\[
	\|x+\tau y\|=\|x-\tau y\|=1+\tau.
	\]
	It follows that $\mathsf{J}^{B}_{X,t}(\tau)=1+\tau$ and hence $\mathsf{C}^{B}_t(X)=2$ by Proposition \ref{prop:basic}.
\end{proof}

\begin{corollary}
	Every nontrivial $\ell_\infty$-sum is not uniformly non-square.
\end{corollary}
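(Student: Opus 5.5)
The plan is to show directly that the Birkhoff James constant $\mathsf{J}_{B}(X)$ equals $2$ for $X=Y\oplus_\infty Z$, and then appeal to the Papini--Baronti characterization \eqref{eq:PB-theorem}. An alternative would be to go through Example~\ref{thm:linfty-sum} and a characterization of uniform non-squareness by $\mathsf{C}^{B}_t(X)<2$. That characterization has not yet been established at this point in the paper, so the most economical route is to reuse the explicit pair constructed in the proof of Example~\ref{thm:linfty-sum} together with the remark identifying $\mathsf{J}^{B}_{X,-\infty}(1)=\mathsf{J}_{B}(X)$.

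Concretely, I will take $u\in S_Y$, $v\in S_Z$ and set $x=(u,v)$, $y=(u,-v)$. The proof of Example~\ref{thm:linfty-sum} already shows that $x,y\in S_X$, that $x\perp_B y$, and that
\[
\|x+\tau y\|=\|x-\tau y\|=1+\tau\qquad(0\leq\tau\leq1).
\]
Taking $\tau=1$ gives $\min(\|x+y\|,\|x-y\|)=2$. Hence $\mathsf{J}_{B}(X)\geq 2$. Since Proposition~\ref{prop:basic}(a) gives $\mathsf{J}_{B}(X)=\mathsf{J}^{B}_{X,-\infty}(1)\leq 2$, we get $\mathsf{J}_{B}(X)=2$. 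By \eqref{eq:PB-theorem}, $X$ is not uniformly non-square.

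The cited theorem is not even needed, because the same pair refutes the definition of uniform non-squareness directly. For every $\delta>0$ we have $x,y\in S_X$ with $\|x+y\|/2=1>1-\delta$ and $\|x-y\|/2=1>1-\delta$. So neither inequality in the definition holds, and no uniform $\delta$ exists. I will include this one-line direct verification so the corollary does not rely on an external result.

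No step is a real obstacle. The only point needing care is interpretive: ``nontrivial'' must mean that both summands $Y$ and $Z$ are nonzero, so that the unit vectors $u$ and $v$ exist. This is exactly the hypothesis of Example~\ref{thm:linfty-sum}, and I will state it explicitly.
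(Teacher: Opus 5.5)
Your proof is correct. It takes a more direct route than the paper's.

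The paper gives no written proof of this corollary. It sits immediately after Example~\ref{thm:linfty-sum}, so the intended derivation is evidently $\mathsf{C}^{B}_t(X)=2$ combined with Theorem~\ref{thm:endpoint}. That theorem is only stated afterwards, so this is a forward reference. Its relevant direction ($\mathsf{C}^{B}_t(X)=2\Rightarrow$ not uniformly non-square) passes through Lemma~\ref{lem:saturation} to recover $\mathsf{J}_{B}(X)=2$, and then through \eqref{eq:PB-theorem}.

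You take the same pair $x=(u,v)$, $y=(u,-v)$ and read off $\|x+y\|=\|x-y\|=2$. This refutes uniform non-squareness straight from the definition, and your direct check does not even use $x\perp_B y$. The direction of \eqref{eq:PB-theorem} used in your first route ($\mathsf{J}_{B}(X)=2\Rightarrow$ not uniformly non-square) is the trivial one. The substance of the Papini--Baronti theorem is the converse, which produces Birkhoff-orthogonal almost-square pairs. So your remark that the cited theorem is unnecessary is exactly right.

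The paper's route shows the corollary as an instance of the new constant detecting non-squareness. Yours is self-contained and avoids the forward reference. It also makes clear that the corollary is just the classical fact that a nontrivial $\ell_\infty$-sum contains an isometric copy of $\ell_\infty^2$, namely the span of $(u,0)$ and $(0,v)$.
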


Next we will prove that the endpoint value $2$ of the new constant is exactly the obstruction to uniform non-squareness.

We first record a compactness-free saturation lemma for power means.

\begin{lemma}\label{lem:saturation}
	Fix $-\infty\leq t<\infty$.  Let $A_n>0$ and $0<a_n,b_n\leq A_n$.  If
	\[
	\frac{\mathsf{M}_t(a_n,b_n)}{A_n}\longrightarrow1,
	\]
	then
	\[
	\frac{\min\{a_n,b_n\}}{A_n}\longrightarrow1.
	\]
\end{lemma}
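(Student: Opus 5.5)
Since $\mathsf{M}_t$ is homogeneous of degree one, $\mathsf{M}_t(a,b)=\max\{a,b\}\,\mathsf{M}_t(u,v)$ where $u=a/\max\{a,b\}$ and $v=b/\max\{a,b\}$. We therefore normalize by $A_n$: put $\alpha_n=a_n/A_n$ and $\beta_n=b_n/A_n$, so that $\alpha_n,\beta_n\in(0,1]$ and $\mathsf{M}_t(\alpha_n,\beta_n)\to1$. The aim is to show $\min\{\alpha_n,\beta_n\}\to1$.

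The key observation is that the two-variable power mean is an explicit function of the pair. By symmetry we may assume, for each $n$, that $m_n:=\min\{\alpha_n,\beta_n\}$ and the other entry is at most $1$. Monotonicity of $\mathsf{M}_t$ in each variable gives
\[
\mathsf{M}_t(\alpha_n,\beta_n)\le \mathsf{M}_t(m_n,1)=:\varphi_t(m_n).
\]
Here $\varphi_t(m)=\bigl((1+m^t)/2\bigr)^{1/t}$ for $t\neq0$, $\varphi_0(m)=\sqrt m$, and $\varphi_{-\infty}(m)=m$. Each $\varphi_t$ is continuous and strictly increasing on $(0,1]$, with $\varphi_t(1)=1$.

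The argument then goes by contradiction. Suppose $m_n\not\to1$. Then there are $\varepsilon>0$ and a subsequence with $m_n\le 1-\varepsilon$. Along that subsequence,
\[
\mathsf{M}_t(\alpha_n,\beta_n)\le\varphi_t(1-\varepsilon)<1,
\]
which contradicts $\mathsf{M}_t(\alpha_n,\beta_n)\to1$. The strict inequality $\varphi_t(1-\varepsilon)<1$ is checked case by case:
\begin{itemize}
\item for $t>0$, $(1+(1-\varepsilon)^t)/2<1$;
\item for $t<0$, $(1-\varepsilon)^t>1$, so the base exceeds $1$ and raising it to $1/t<0$ gives a value below $1$;
\item for $t=0$ and $t=-\infty$ it is immediate.
\end{itemize}

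The only delicate point is $t<0$. There $m_n\to0$ is not excluded a priori, so we need $\varphi_t$ to be bounded away from $1$ on all of $(0,1-\varepsilon]$, not merely near $1-\varepsilon$. This holds by monotonicity, since $\varphi_t(m)\le\varphi_t(1-\varepsilon)$ for every $m\in(0,1-\varepsilon]$. Note also that the argument needs no compactness: the bound $\varphi_t(1-\varepsilon)$ is uniform in $n$ and does not depend on $A_n$.
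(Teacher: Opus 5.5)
Your proof is correct and follows essentially the same route as the paper: normalize by $A_n$, pass to a subsequence with $\min\{a_n,b_n\}\le(1-\varepsilon)A_n$, and use monotonicity of $\mathsf{M}_t$ to get the uniform bound $\mathsf{M}_t(1,1-\varepsilon)<1$, which is a contradiction. Your case-by-case check of that strict inequality, in particular for $t<0$, fills in a detail the paper leaves implicit.
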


\begin{proof}
	For $t=-\infty$ the assertion is immediate.  Let $t$ be finite.  Suppose the conclusion fails.  Then there are $\varepsilon>0$ and a subsequence, still denoted by $n$, such that
	\[
	\min\{a_n,b_n\}\leq(1-\varepsilon)A_n.
	\]
	Set $u_n=a_n/A_n$ and $v_n=b_n/A_n$.  Then $0<u_n,v_n\leq1$ and $\min\{u_n,v_n\}\leq1-\varepsilon$.  Since $\mathsf{M}_t$ is increasing in each variable,
	\[
	\mathsf{M}_t(u_n,v_n)\leq \mathsf{M}_t(1,1-\varepsilon)<1.
	\]
	This contradicts $\mathsf{M}_t(a_n,b_n)/A_n=\mathsf{M}_t(u_n,v_n)\to1$.
\end{proof}

\begin{theorem}\label{thm:endpoint}
	Let $X$ be a real Banach space and $-\infty\leq t<\infty$.  Then
	\begin{equation}\label{eq:endpoint-equivalence}
		\mathsf{C}^{B}_t(X)=2
		\quad\Longleftrightarrow\quad
		X\text{ is not uniformly non-square}.
	\end{equation}
	Consequently,
	\[
	\mathsf{C}^{B}_t(X)<2
	\quad\Longleftrightarrow\quad
	X\text{ is uniformly non-square}.
	\]
\end{theorem}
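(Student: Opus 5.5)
We prove the two implications separately, reducing both to the Papini--Baronti characterization \eqref{eq:PB-theorem}, $\mathsf{J}_B(X)=2$ if and only if $X$ is not uniformly non-square. The second displayed equivalence is just the contrapositive of the first together with $\mathsf{C}^{B}_t(X)\leq 2$ from Proposition~\ref{prop:basic}.

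\emph{($\Leftarrow$).} Assume $X$ is not uniformly non-square. By \eqref{eq:PB-theorem} there are $x_n,y_n\in S_X$ with $x_n\perp_B y_n$ and $\min(\|x_n+y_n\|,\|x_n-y_n\|)\to 2$. Take $\tau=1$. By \eqref{eq:power-monotone}, $\mathsf{M}_t(\|x_n+y_n\|,\|x_n-y_n\|)\geq \min(\|x_n+y_n\|,\|x_n-y_n\|)$, hence $\mathsf{J}^{B}_{X,t}(1)\geq 2$. This gives $\mathsf{C}^{B}_t(X)\geq \mathsf{J}^{B}_{X,t}(1)^2/2\geq 2$, and the upper bound in \eqref{eq:basic-bounds} yields equality.

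\emph{($\Rightarrow$).} Assume $\mathsf{C}^{B}_t(X)=2$. Choose $\tau_n\in[0,1]$ with $\mathsf{J}^{B}_{X,t}(\tau_n)^2/(1+\tau_n^2)\to 2$. Since $\mathsf{J}^{B}_{X,t}(\tau_n)^2/(1+\tau_n^2)\leq (1+\tau_n)^2/(1+\tau_n^2)=1+2\tau_n/(1+\tau_n^2)$, and $2\tau/(1+\tau^2)\to 1$ forces $\tau\to 1$ (the function is continuous and increasing on $[0,1]$ with unique maximum at $1$), we get $\tau_n\to1$ and $\mathsf{J}^{B}_{X,t}(\tau_n)\to 2$. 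Then pick orthogonal pairs $x_n\perp_B y_n$ in $S_X$ with $\mathsf{M}_t(\|x_n+\tau_n y_n\|,\|x_n-\tau_n y_n\|)\geq \mathsf{J}^{B}_{X,t}(\tau_n)-1/n$, so this mean tends to $2$.

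Now apply Lemma~\ref{lem:saturation} with $A_n=2$, $a_n=\|x_n+\tau_ny_n\|$, $b_n=\|x_n-\tau_ny_n\|$. The hypothesis $0<a_n,b_n\leq 2$ holds by Lemma~\ref{lem:anchor}. The lemma gives $\min(a_n,b_n)\to 2$. Finally transfer from $\tau_n$ to $1$: by \eqref{eq:convex-interpolation}, $\|x_n\pm y_n\|\geq (\|x_n\pm\tau_n y_n\|-(1-\tau_n))/\tau_n\to 2$, using $\tau_n\to1$ and $\tau_n>0$ eventually. (Alternatively, $\|x_n\pm y_n\|\geq \|x_n\pm\tau_ny_n\|-(1-\tau_n)$ by the triangle inequality.) Hence $\min(\|x_n+y_n\|,\|x_n-y_n\|)\to 2$ along Birkhoff-orthogonal unit pairs, so $\mathsf{J}_B(X)=2$, and \eqref{eq:PB-theorem} shows $X$ is not uniformly non-square.

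The main obstacle is this forward direction, specifically the extraction of $\tau_n\to1$ from the normalized supremum and the passage from a mean of the two chords to their minimum. Both are handled by the elementary bound $(1+\tau)^2/(1+\tau^2)$ and the saturation Lemma~\ref{lem:saturation}, which avoids any compactness argument.
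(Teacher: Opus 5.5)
Your proof is correct and follows the paper's argument essentially step for step. For one direction you use the $\tau=1$ lower bound via the Papini--Baronti theorem; for the other, the bound $(1+\tau)^2/(1+\tau^2)$ forces $\tau_n\to1$, followed by Lemma~\ref{lem:saturation} (you take $A_n=2$ where the paper takes $A_n=1+\tau_n$, an inessential difference), and your explicit transfer $\|x_n\pm y_n\|\geq\|x_n\pm\tau_n y_n\|-(1-\tau_n)$ from $\tau_n$ to $\tau=1$ supplies a step the paper skips when it passes from $\min\{a_n,b_n\}\to2$ directly to $\mathsf{J}_{B}(X)=2$.
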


\begin{proof}
	Assume first that $X$ is not uniformly non-square.  By \eqref{eq:PB-theorem}, $\mathsf{J}_{B}(X)=2$.  Thus, for every $\varepsilon>0$, there exist $x_\varepsilon,y_\varepsilon\in S_X$ with $x_\varepsilon\perp_B y_\varepsilon$ and
	\[
	\min\{\|x_\varepsilon+y_\varepsilon\|,\|x_\varepsilon-y_\varepsilon\|\}>2-\varepsilon.
	\]
	Since both norms are at most $2$, the monotonicity of $\mathsf{M}_t$ gives
	\[
	\mathsf{J}^{B}_{X,t}(1)\geq \mathsf{M}_t(2-\varepsilon,2-\varepsilon)=2-\varepsilon.
	\]
	Hence
	\[
	\mathsf{C}^{B}_t(X)\geq \frac{(2-\varepsilon)^2}{2}.
	\]
	Letting $\varepsilon\downarrow0$ and using Proposition \ref{prop:basic}, we get $\mathsf{C}^{B}_t(X)=2$.
	
	Conversely, assume that $\mathsf{C}^{B}_t(X)=2$.  Choose $\tau_n\in[0,1]$ and $x_n,y_n\in S_X$ with $x_n\perp_B y_n$ such that, with
	\[
	a_n=\|x_n+\tau_n y_n\|,
	\qquad
	b_n=\|x_n-\tau_n y_n\|,
	\]
	we have
	\begin{equation}\label{eq:approach-two}
		\frac{\mathsf{M}_t(a_n,b_n)^2}{1+\tau_n^2}\longrightarrow2.
	\end{equation}
	By Lemma \ref{lem:anchor}, $a_n,b_n\leq1+\tau_n$, so
	\[
	\frac{\mathsf{M}_t(a_n,b_n)^2}{1+\tau_n^2}
	\leq
	\frac{(1+\tau_n)^2}{1+\tau_n^2}
	\leq2.
	\]
	The scalar function $(1+\tau)^2/(1+\tau^2)$ has the unique maximum $2$ on $[0,1]$ at $\tau=1$.  Hence \eqref{eq:approach-two} implies $\tau_n\to1$ and
	\[
	\frac{\mathsf{M}_t(a_n,b_n)}{1+\tau_n}\longrightarrow1.
	\]
	Applying Lemma \ref{lem:saturation} with $A_n=1+\tau_n$, we get
	\[
	\min\{a_n,b_n\}\longrightarrow2.
	\]
	It follows that $\mathsf{J}_{B}(X)=2$.  By \eqref{eq:PB-theorem}, $X$ is not uniformly non-square.
\end{proof}

\begin{corollary}\label{cor:superreflexive}
	If $\mathsf{C}^{B}_t(X)<2$ for one, equivalently for every, $-\infty\leq t<\infty$, then $X$ is super-reflexive and has the fixed point property for nonexpansive self-mappings on closed bounded convex subsets.
\end{corollary}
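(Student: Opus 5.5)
The plan is to reduce the corollary to Theorem~\ref{thm:endpoint} and then quote two classical facts about uniformly non-square spaces. No new estimate of $\mathsf{C}^{B}_t(X)$ is needed.

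First, I settle the phrase ``for one, equivalently for every.'' By Theorem~\ref{thm:endpoint}, for each fixed $t\in[-\infty,\infty)$, the condition $\mathsf{C}^{B}_t(X)<2$ holds exactly when $X$ is uniformly non-square. The right-hand side does not depend on $t$. So if $\mathsf{C}^{B}_{t_0}(X)<2$ for one $t_0$, then $X$ is uniformly non-square, and hence $\mathsf{C}^{B}_t(X)<2$ for every $t$.

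As a consistency check for $t\leq t_0$, Proposition~\ref{prop:basic}(b) gives
\[
\mathsf{C}^{B}_t(X)\leq\mathsf{C}^{B}_{t_0}(X)<2
\]
directly. For $t>t_0$, however, monotonicity does not help, and Theorem~\ref{thm:endpoint} is genuinely needed.

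Next, I use that $X$ is uniformly non-square. I invoke James' theorem~\cite{James1964}: every uniformly non-square Banach space is reflexive. Uniform non-squareness is a finite-dimensional condition, because it involves only pairs $x,y\in S_X$. It therefore passes to every space finitely representable in $X$, with the same $\delta$ (up to an arbitrarily small loss, which can be absorbed by shrinking $\delta$). Hence every such space is reflexive, and $X$ is super-reflexive. Alternatively, I could quote the known result that uniformly non-square spaces admit an equivalent uniformly convex norm (James, Enflo), which also gives super-reflexivity.

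Finally, for the fixed point property, I cite the theorem of García-Falset, Llorens-Fuster and Mazcuñán-Navarro: every uniformly non-square Banach space has the fixed point property for nonexpansive self-maps of nonempty closed bounded convex sets.

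The only delicate point is this last step. Uniform non-squareness does not by itself give normal structure in an elementary way, so I would rely on that published theorem rather than try to reprove it. If a more self-contained route were wanted, I would try to bound $C_t(X)$ or the James constant via $\mathsf{C}^{B}_t(X)$ and use known normal-structure criteria. However, Proposition~\ref{prop:basic}(d) gives only $\mathsf{C}^{B}_t(X)\leq C_t(X)$, which points the wrong way for that, so I would not pursue it.
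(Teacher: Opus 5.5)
Your proposal is correct and follows the same route as the paper: Theorem~\ref{thm:endpoint} turns $\mathsf{C}^{B}_t(X)<2$, for any single $t$ and hence for all $t$, into uniform non-squareness, and the conclusion is then the standard consequences of uniform non-squareness. The paper leaves those consequences uncited, and your references are the right ones: James' theorem together with stability of uniform non-squareness under finite representability for super-reflexivity, and the Garc\'{\i}a-Falset--Llorens-Fuster--Mazcu\~{n}\'{a}n-Navarro theorem for the fixed point property.
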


\begin{proof}
	By Theorem \ref{thm:endpoint}, $X$ is uniformly non-square.  The conclusion follows from the standard consequences of uniform non-squareness.
\end{proof}

\begin{remark}
	The equivalence in Theorem \ref{thm:endpoint} is stronger than a formal restriction of Takahashi's unrestricted result.  The Birkhoff condition could have destroyed extremal pairs, but the Papini--Baronti theorem \eqref{eq:PB-theorem} shows that, at the endpoint $2$, enough extremal pairs remain Birkhoff orthogonal.
\end{remark}

We next compare $\mathsf{C}^{B}_t(X)$ with the Birkhoff James constant $\mathsf{J}_{B}(X)$ and the Birkhoff arithmetic triangle constant $\mathsf{A}_{2}^{B}(X)$.

Let
\[
j=\mathsf{J}_{B}(X).
\]
For $0\leq\tau\leq1$ set
\[
\Phi_{t,j}(\tau)=\mathsf{M}_t(1+\tau,1-\tau+\tau j).
\]

\begin{theorem}\label{thm:J-bound}\label{prop:JBcompare}
	For every Banach space $X$ and every $-\infty\leq t<\infty$,
	\begin{equation}\label{eq:J-bound}
		\mathsf{J}^{B}_{X,t}(\tau)\leq \Phi_{t,\mathsf{J}_{B}(X)}(\tau)
		\qquad(0\leq\tau\leq1).
	\end{equation}
	Consequently,
	\begin{equation}\label{eq:CJ-bound}
		\mathsf{C}^{B}_t(X)\leq \sup_{0\leq\tau\leq1}
		\frac{\Phi_{t,\mathsf{J}_{B}(X)}(\tau)^2}{1+\tau^2}.
	\end{equation}
\end{theorem}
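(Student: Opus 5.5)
We fix $0\le\tau\le1$ and an admissible pair $x,y\in S_X$ with $x\perp_B y$, and set $a=\|x+\tau y\|$, $b=\|x-\tau y\|$. The target bound involves $j=\mathsf{J}_{B}(X)$, which only controls $\min\{\|x+y\|,\|x-y\|\}$. So the plan is to bound the smaller chord through $j$ and the larger one trivially. Then we use symmetry and monotonicity of $\mathsf{M}_t$ to reach $\Phi_{t,j}(\tau)$, and finally take suprema.

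\textbf{Step 1 (reduction by sign symmetry).} Since $x\perp_B y$ implies $x\perp_B(-y)$, and $\mathsf{M}_t$ is symmetric in its two arguments, we may replace $y$ by $-y$ if necessary. We therefore assume
\[
\|x-y\|=\min\{\|x+y\|,\|x-y\|\}.
\]
By definition of $\mathsf{J}_{B}(X)$, applied to the pair $x,y$ (or $x,-y$), this gives $\|x-y\|\le j$.

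\textbf{Step 2 (chord estimates).} By \eqref{eq:convex-interpolation} of Lemma~\ref{lem:anchor},
\[
b=\|x-\tau y\|\le(1-\tau)+\tau\|x-y\|\le 1-\tau+\tau j .
\]
By \eqref{eq:anchor-bounds}, $a\le 1+\tau$. Both bounds are positive, since $j\ge0$ and $1-\tau\ge0$; if $\tau=1$ and $j=0$ one could worry, but $j\ge1$ anyway because $\|x\pm y\|\ge1$.

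\textbf{Step 3 (passing to the power mean).} Since $\mathsf{M}_t$ is increasing in each variable,
\[
\mathsf{M}_t(a,b)\le \mathsf{M}_t(1+\tau,\,1-\tau+\tau j)=\Phi_{t,j}(\tau).
\]
Taking the supremum over all admissible pairs gives \eqref{eq:J-bound}. Squaring, dividing by $1+\tau^2$, and taking the supremum over $\tau\in[0,1]$ then gives \eqref{eq:CJ-bound}. The case $t=-\infty$ is covered by the same monotonicity, since $\min$ is increasing in each argument.

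\textbf{Main obstacle.} The only delicate point is Step 1: the reduction must not lose generality, because $\mathsf{J}_B$ bounds only the minimum of the two full chords. The sign symmetry of Birkhoff orthogonality in the second variable is what justifies assigning the bound $j$ to the minus chord. The symmetry of $\mathsf{M}_t$ then makes it irrelevant which of $a$ and $b$ receives the larger bound.
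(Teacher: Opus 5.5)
Your proposal is correct and follows essentially the paper's argument. The larger chord is bounded by $1+\tau$, the smaller one by $1-\tau+\tau\mathsf{J}_{B}(X)$ via \eqref{eq:convex-interpolation}, and monotonicity and symmetry of $\mathsf{M}_t$ finish the proof; the only cosmetic difference is that you fix signs by replacing $y$ with $-y$, whereas the paper bounds $\min\{A,B\}$ directly.
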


\begin{proof}
	Let $x,y\in S_X$ and $x\perp_B y$.  Put
	\[
	A=\|x+\tau y\|,
	\qquad
	B=\|x-\tau y\|.
	\]
	By Lemma \ref{lem:anchor}, $A,B\leq1+\tau$.  On the other hand, using \eqref{eq:convex-interpolation},
	\[
	\min\{A,B\}
	\leq
	1-\tau+\tau\min\{\|x+y\|,\|x-y\|\}
	\leq 1-\tau+\tau\mathsf{J}_{B}(X).
	\]
	Since $\mathsf{M}_t$ is increasing in each variable, the largest possible value under these two constraints is obtained by placing one variable at $1+\tau$ and the other at $1-\tau+\tau\mathsf{J}_{B}(X)$.  Thus
	\[
	\mathsf{M}_t(A,B)\leq \mathsf{M}_t(1+\tau,1-\tau+\tau\mathsf{J}_{B}(X)).
	\]
	Taking the supremum over all admissible pairs gives \eqref{eq:J-bound}; then \eqref{eq:CJ-bound} follows from the definition of $\mathsf{C}^{B}_t(X)$.
\end{proof}

For $t=-\infty$ this becomes an explicit two-sided estimate.

\begin{corollary}\label{cor:min-bound}
	For every Banach space $X$,
	\begin{equation}\label{eq:min-two-sided}
		\frac{\mathsf{J}_{B}(X)^2}{2}\leq \mathsf{C}^{B}_{-\infty}(X)
		\leq 1+\big(\mathsf{J}_{B}(X)-1\big)^2.
	\end{equation}
	In particular, if $\mathsf{J}_{B}(X)<2$, then $\mathsf{C}^{B}_{-\infty}(X)<2$.
\end{corollary}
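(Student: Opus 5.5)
The lower bound comes from taking $\tau=1$ in \eqref{eq:CB-def}, and the upper bound from Theorem \ref{thm:J-bound} with $t=-\infty$ followed by a one-variable maximization.

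\emph{Lower bound.} By Remark (ii), $\mathsf{J}^{B}_{X,-\infty}(1)=\mathsf{J}_{B}(X)$. Since $\tau=1$ is admissible in \eqref{eq:CB-def}, we get
\[
\mathsf{C}^{B}_{-\infty}(X)\geq \frac{\mathsf{J}^{B}_{X,-\infty}(1)^2}{1+1}=\frac{\mathsf{J}_{B}(X)^2}{2}.
\]

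\emph{Upper bound.} Write $j=\mathsf{J}_{B}(X)$ and note $1\leq j\leq 2$; the case $j\geq1$ follows from Lemma \ref{lem:anchor}. For $t=-\infty$,
\[
\Phi_{-\infty,j}(\tau)=\min\{1+\tau,\,1-\tau+\tau j\}=1+(j-1)\tau,
\]
because $j\leq 2$ gives $1+(j-1)\tau\leq 1+\tau$. Then \eqref{eq:CJ-bound} yields
\[
\mathsf{C}^{B}_{-\infty}(X)\leq\sup_{0\leq\tau\leq1} g(\tau),\qquad g(\tau)=\frac{(1+c\tau)^2}{1+\tau^2},\quad c=j-1\in[0,1].
\]

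\emph{Maximizing $g$.} This is the only real computation. The numerator of $g'(\tau)$ is proportional to
\[
(1+c\tau)\bigl(c(1+\tau^2)-\tau(1+c\tau)\bigr)=(1+c\tau)(c-\tau).
\]
So $g$ increases up to $\tau=c$ and decreases afterward, with the critical point $\tau=c$ lying in $[0,1]$. Its maximum value is
\[
g(c)=\frac{(1+c^2)^2}{1+c^2}=1+c^2=1+(j-1)^2,
\]
which is the claimed upper bound. Equivalently, Cauchy--Schwarz gives $(1\cdot1+c\tau)^2\leq(1+c^2)(1+\tau^2)$ directly, avoiding calculus.

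\emph{The final assertion.} If $j<2$, then $c<1$, so $1+c^2<2$, and hence $\mathsf{C}^{B}_{-\infty}(X)<2$. I expect no real obstacle here; the only care needed is checking that the minimum in $\Phi_{-\infty,j}$ is attained by the second term and that the critical point $\tau=c$ lies in $[0,1]$.
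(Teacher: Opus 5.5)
Your proof is correct and follows the paper's argument. The lower bound comes from taking $\tau=1$ together with $\mathsf{J}^{B}_{X,-\infty}(1)=\mathsf{J}_{B}(X)$, and the upper bound comes from Theorem~\ref{thm:J-bound}, which reduces to the bound $1+(j-1)\tau$ and the scalar maximization of $(1+c\tau)^2/(1+\tau^2)$ at $\tau=c$. Your derivative computation and your Cauchy--Schwarz alternative actually justify this scalar maximum, which the paper only asserts.
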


\begin{proof}
	The lower bound follows by taking $\tau=1$ in the definition:
	\[
	\mathsf{C}^{B}_{-\infty}(X)
	\geq \frac{\mathsf{J}^{B}_{X,-\infty}(1)^2}{2}
	=\frac{\mathsf{J}_{B}(X)^2}{2}.
	\]
	For the upper bound, Theorem \ref{thm:J-bound} gives
	\[
	\mathsf{J}^{B}_{X,-\infty}(\tau)
	\leq 1-\tau+\tau\mathsf{J}_{B}(X)=1+a\tau,
	\qquad a=\mathsf{J}_{B}(X)-1\in[0,1].
	\]
	The scalar maximum
	\[
	\max_{0\leq\tau\leq1}\frac{(1+a\tau)^2}{1+\tau^2}=1+a^2
	\]
	is obtained at $\tau=a$.  This proves the upper bound.
\end{proof}

The previous estimate uses the smaller chord.  For $t\leq1$, a sharper and more symmetric estimate is possible because power means do not exceed the arithmetic mean.

\begin{theorem}\label{thm:A-bound}
	Let $-\infty\leq t\leq1$.  Then, for every Banach space $X$,
	\begin{equation}\label{eq:A-tau-bound}
		\mathsf{J}^{B}_{X,t}(\tau)\leq 1+\tau\big(\mathsf{A}_{2}^{B}(X)-1\big)
		\qquad(0\leq\tau\leq1),
	\end{equation}
	and therefore
	\begin{equation}\label{eq:A-C-bound}
		\mathsf{C}^{B}_t(X)\leq 1+\big(\mathsf{A}_{2}^{B}(X)-1\big)^2.
	\end{equation}
\end{theorem}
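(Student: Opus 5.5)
The plan is to bound the power mean by the arithmetic mean and then use the convex interpolation of Lemma~\ref{lem:anchor} on each chord separately.

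Fix $\tau\in[0,1]$ and an admissible pair $x,y\in S_X$ with $x\perp_B y$, and set $A=\|x+\tau y\|$ and $B=\|x-\tau y\|$. Since $t\leq1$, the power-mean monotonicity \eqref{eq:power-monotone} gives
\[
\mathsf{M}_t(A,B)\leq \mathsf{M}_1(A,B)=\frac{A+B}{2}.
\]
Next apply \eqref{eq:convex-interpolation} to both signs:
\[
A\leq (1-\tau)+\tau\|x+y\|,\qquad B\leq(1-\tau)+\tau\|x-y\|.
\]
Averaging these two inequalities yields
\[
\frac{A+B}{2}\leq (1-\tau)+\tau\,\frac{\|x+y\|+\|x-y\|}{2}\leq (1-\tau)+\tau\,\mathsf{A}_{2}^{B}(X).
\]
The last step uses only the definition of $\mathsf{A}_{2}^{B}(X)$, because $x\perp_B y$ with $x,y\in S_X$. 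The right-hand side equals $1+\tau(\mathsf{A}_{2}^{B}(X)-1)$. Taking the supremum over admissible pairs gives \eqref{eq:A-tau-bound}.

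For \eqref{eq:A-C-bound}, put $a=\mathsf{A}_{2}^{B}(X)-1$. The lower bound $\|x\pm y\|\geq1$ from Lemma~\ref{lem:anchor} gives $a\geq0$, and the triangle inequality gives $a\leq1$, so $a\in[0,1]$. Squaring \eqref{eq:A-tau-bound}, which is legitimate since both sides are positive, and dividing by $1+\tau^2$ gives
\[
\mathsf{C}^{B}_t(X)\leq\sup_{0\leq\tau\leq1}\frac{(1+a\tau)^2}{1+\tau^2}.
\]
This is the same scalar maximization already carried out in the proof of Corollary~\ref{cor:min-bound}. By Cauchy--Schwarz, $(1\cdot1+a\cdot\tau)^2\leq(1+a^2)(1+\tau^2)$, with equality at $\tau=a\in[0,1]$. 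Hence the supremum equals $1+a^2$.

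I do not expect a genuine obstacle. The only point needing care is the hypothesis $t\leq1$: it is used exactly once, to pass from $\mathsf{M}_t$ to the arithmetic mean. That step is what allows the two interpolation bounds to be averaged rather than one chord being capped at $1+\tau$, as in Theorem~\ref{thm:J-bound}.
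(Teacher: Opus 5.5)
Your proof is correct and follows the paper's argument essentially step by step. The steps are: for $t\leq1$, bound the power mean by the arithmetic mean; apply the convex interpolation to both chords; invoke the definition of $\mathsf{A}_{2}^{B}(X)$; and finish with the scalar maximization of $(1+a\tau)^2/(1+\tau^2)$ for $a\in[0,1]$. Your Cauchy--Schwarz justification of that supremum (equality at $\tau=a$) and your check that $a\in[0,1]$ simply make explicit what the paper asserts without detail.
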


\begin{proof}
	Let $x,y\in S_X$ and $x\perp_B y$.  Since $t\leq1$, $\mathsf{M}_t(a,b)\leq(a+b)/2$ for all $a,b>0$.  Hence, by \eqref{eq:convex-interpolation},
	\begin{align*}
		\mathsf{M}_t(\|x+\tau y\|,\|x-\tau y\|)
		&\leq \frac{\|x+\tau y\|+\|x-\tau y\|}{2}\\
		&\leq 1-\tau+\tau\frac{\|x+y\|+\|x-y\|}{2}\\
		&\leq 1+\tau(\mathsf{A}_{2}^{B}(X)-1).
	\end{align*}
	Taking suprema gives \eqref{eq:A-tau-bound}.  Put $a=\mathsf{A}_{2}^{B}(X)-1\in[0,1]$.  Then
	\[
	\mathsf{C}^{B}_t(X)\leq \sup_{0\leq\tau\leq1}\frac{(1+a\tau)^2}{1+\tau^2}=1+a^2,
	\]
	which is \eqref{eq:A-C-bound}.
\end{proof}

\begin{remark}
	The estimate \eqref{eq:A-C-bound} is sharp in the following two very different regimes.  In $\ell_\infty^2$, one has $\mathsf{A}_{2}^{B}(X)=2$ and $\mathsf{C}^{B}_t(X)=2$.  In affine regular hexagonal Radon planes, $\mathsf{A}_{2}^{B}(X)=3/2$ and, for $t\leq1$, $\mathsf{C}^{B}_t(X)=5/4$. Thus the scalar factor $1+(\mathsf{A}_{2}^{B}(X)-1)^2$ cannot be improved in this generality.
\end{remark}

We now relate the new constant to orthogonal moduli of convexity and smoothness.  Several variants of these moduli exist; the following forms are tailored to the present estimates.

Define\cite{DuLi2025}
\[
\delta_B^+(X)=\inf\left\{1-\frac{\|x+y\|}{2}:x,y\in S_X,\ x\perp_B y\right\}.
\]
Since $x\perp_B y$ implies $x\perp_B(-y)$, the same bound applies to both signs.  Thus $0\leq\delta_B^+(X)\leq1/2$.

\begin{proposition}\label{prop:delta-bound}
	For every Banach space $X$ and every $-\infty\leq t<\infty$,
	\begin{equation}\label{eq:delta-bound}
		\mathsf{C}^{B}_t(X)\leq 1+\big(1-2\delta_B^+(X)\big)^2.
	\end{equation}
\end{proposition}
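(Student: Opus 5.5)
The idea is to reduce the statement to the earlier bounds in Theorem \ref{thm:A-bound} or Corollary \ref{cor:min-bound}, after noting that $\delta_B^+(X)$ controls the length of \emph{every} orthogonal chord, not only the minimum or the average. By definition of $\delta_B^+(X)$, for all $x,y\in S_X$ with $x\perp_B y$ we have $\|x+y\|\leq 2-2\delta_B^+(X)$. Since $x\perp_B(-y)$ as well, the same bound holds for $\|x-y\|$. Put
\[
a=1-2\delta_B^+(X)\in[0,1],
\qquad\text{so that}\qquad
\max\{\|x+y\|,\|x-y\|\}\leq 1+a
\]
for every admissible pair.

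Next I would bound both $\tau$-chords through the convex interpolation \eqref{eq:convex-interpolation}. For $0\leq\tau\leq1$ this gives
\[
\|x\pm\tau y\|\leq (1-\tau)+\tau\|x\pm y\|\leq (1-\tau)+\tau(1+a)=1+a\tau .
\]
Both arguments of $\mathsf{M}_t$ are therefore at most $1+a\tau$. By \eqref{eq:power-monotone}, $\mathsf{M}_t\leq\max$, and this holds for every $-\infty\leq t<\infty$, with no restriction $t\leq1$. Taking the supremum over admissible pairs yields $\mathsf{J}^{B}_{X,t}(\tau)\leq 1+a\tau$.

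Finally I would reuse the scalar maximization from the proof of Corollary \ref{cor:min-bound}:
\[
\sup_{0\leq\tau\leq1}\frac{(1+a\tau)^2}{1+\tau^2}=1+a^2 .
\]
Setting the derivative to zero gives the condition $a(1+\tau^2)=\tau(1+a\tau)$, i.e. $\tau=a$, and this point lies in $[0,1]$. Substituting $a=1-2\delta_B^+(X)$ gives \eqref{eq:delta-bound}.

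I see no real obstacle. The one point to check carefully is that $\delta_B^+$ bounds the \emph{larger} chord. This follows from the sign symmetry of Birkhoff orthogonality, and it is exactly why the estimate holds for all $t$, including $t>1$, where Theorem \ref{thm:A-bound} does not apply. One can also observe, as an alternative route for $t\leq1$, that $\mathsf{A}_2^B(X)\leq 2-2\delta_B^+(X)$. Theorem \ref{thm:A-bound} then gives the same bound in that range, but the direct argument above covers every $t$.
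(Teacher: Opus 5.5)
Your proof is correct and follows the paper's argument essentially step for step. You bound both $\|x\pm y\|$ by $2(1-\delta_B^+(X))$ using the sign symmetry of Birkhoff orthogonality, pass this through the convex interpolation of Lemma~\ref{lem:anchor} to get $\mathsf{J}^{B}_{X,t}(\tau)\leq 1+\tau(1-2\delta_B^+(X))$, and finish with the scalar maximization $\sup_{0\leq\tau\leq1}(1+a\tau)^2/(1+\tau^2)=1+a^2$. Your explicit appeal to $\mathsf{M}_t\leq\max$, which is why the bound holds for every $t$, is a step the paper leaves implicit.
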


\begin{proof}
	Let $x,y\in S_X$ with $x\perp_B y$.  By definition of $\delta_B^+(X)$ and by applying it to $y$ and $-y$,
	\[
	\|x+y\|\leq2(1-\delta_B^+(X)),
	\qquad
	\|x-y\|\leq2(1-\delta_B^+(X)).
	\]
	Using \eqref{eq:convex-interpolation},
	\[
	\|x\pm\tau y\|\leq1-\tau+2\tau(1-\delta_B^+(X))=1+\tau(1-2\delta_B^+(X)).
	\]
	Therefore $\mathsf{J}^{B}_{X,t}(\tau)\leq1+\tau(1-2\delta_B^+(X))$.  Maximizing the normalized scalar expression as in the proof of Theorem \ref{thm:A-bound} gives \eqref{eq:delta-bound}.
\end{proof}

The one-sided Birkhoff smoothness envelope is defined as follows \cite{1}:
\[
\rho_B(\tau;X)=\sup\left\{\frac{\|x+\tau y\|+\|x-\tau y\|}{2}-1:x,y\in S_X,\ x\perp_B y\right\}.
\]
And $0\leq\rho_B(\tau;X)\leq\tau$.

\begin{proposition}\label{prop:rho-bound}
	If $-\infty\leq t\leq1$, then
	\begin{equation}\label{eq:rho-bound}
		\mathsf{C}^{B}_t(X)
		\leq
		\sup_{0\leq\tau\leq1}\frac{\big(1+\rho_B(\tau;X)\big)^2}{1+\tau^2}.
	\end{equation}
	In particular, if $\rho_B(\tau;X)\leq L\tau$ for $0\leq\tau\leq1$ with $0\leq L\leq1$, then
	\begin{equation}\label{eq:rho-linear}
		\mathsf{C}^{B}_t(X)\leq 1+L^2.
	\end{equation}
\end{proposition}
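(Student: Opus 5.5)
The plan is to argue pointwise in $\tau$, exactly as in the proof of Theorem~\ref{thm:A-bound}, and then pass to the supremum. Fix $\tau\in[0,1]$ and an admissible pair $x,y\in S_X$ with $x\perp_B y$. Since $t\leq1$, the power-mean monotonicity \eqref{eq:power-monotone} gives $\mathsf{M}_t(a,b)\leq\mathsf{M}_1(a,b)=(a+b)/2$. Hence
\[
\mathsf{M}_t(\|x+\tau y\|,\|x-\tau y\|)\leq \frac{\|x+\tau y\|+\|x-\tau y\|}{2}\leq 1+\rho_B(\tau;X),
\]
where the last step is just the definition of $\rho_B(\tau;X)$ as a supremum over the same admissible pairs. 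Taking the supremum over all such pairs yields $\mathsf{J}^{B}_{X,t}(\tau)\leq 1+\rho_B(\tau;X)$.

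Both sides are positive; the left side is at least $1$ by Proposition~\ref{prop:basic}(a). We may therefore square the inequality, divide by $1+\tau^2$, and take $\sup_{0\leq\tau\leq1}$. By the definition \eqref{eq:CB-def}, this gives \eqref{eq:rho-bound}.

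For \eqref{eq:rho-linear}, substitute $\rho_B(\tau;X)\leq L\tau$ into \eqref{eq:rho-bound}. This reduces the claim to the scalar maximization
\[
\sup_{0\leq\tau\leq1}\frac{(1+L\tau)^2}{1+\tau^2}=1+L^2,
\]
which was already used in Corollary~\ref{cor:min-bound} and Theorem~\ref{thm:A-bound}. For completeness, a one-line proof is Cauchy--Schwarz: $(1\cdot1+L\cdot\tau)^2\leq(1+L^2)(1+\tau^2)$. Equality holds at $\tau=L\in[0,1]$, so the point lies in the admissible range.

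There is no real obstacle. The only points needing care are the use of $t\leq1$, which is precisely what allows replacing the power mean by the arithmetic mean, and checking that the maximizer $\tau=L$ lies in $[0,1]$. The hypothesis $0\leq L\leq1$ guarantees the latter. Even without it, the Cauchy--Schwarz bound still gives the inequality.
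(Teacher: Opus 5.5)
Your proposal is correct and follows the paper's own argument: for $t\leq1$ you bound $\mathsf{M}_t$ by the arithmetic mean, read off $\mathsf{J}^{B}_{X,t}(\tau)\leq 1+\rho_B(\tau;X)$ from the definition of $\rho_B$, and finish with the scalar maximization $\sup_{0\leq\tau\leq1}(1+L\tau)^2/(1+\tau^2)=1+L^2$. Your Cauchy--Schwarz justification of that last step, with the maximizer $\tau=L\in[0,1]$, makes explicit what the paper only calls ``the same scalar maximization.''
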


\begin{proof}
	For $t\leq1$, the power mean is bounded above by the arithmetic mean.  Taking the supremum over Birkhoff-orthogonal pairs gives
	\[
	\mathsf{J}^{B}_{X,t}(\tau)\leq1+\rho_B(\tau;X),
	\]
	which proves \eqref{eq:rho-bound}.  If $\rho_B(\tau;X)\leq L\tau$, the same scalar maximization yields \eqref{eq:rho-linear}.
\end{proof}

For $t=2$ we can use quadratic smoothness.  Define the quadratic smoothness coefficient
\[
\mathfrak{s}_2(X)=\sup_{u\in S_X,\ v\neq0}
\frac{\|u+v\|^2+\|u-v\|^2-2}{2\|v\|^2}
\]
with the convention that it may be $+\infty$.  Hilbert spaces have $\mathfrak{s}_2(H)=1$.

\begin{proposition}\label{prop:quadratic-smooth}
	If $\mathfrak{s}_2(X)<\infty$, then
	\begin{equation}\label{eq:J2-smooth}
		\mathsf{J}^{B}_{X,2}(\tau)^2\leq1+\mathfrak{s}_2(X)\tau^2
		\qquad(0\leq\tau\leq1),
	\end{equation}
	and
	\begin{equation}\label{eq:C2-smooth}
		\mathsf{C}^{B}_2(X)\leq
		\begin{cases}
			1, & \mathfrak{s}_2(X)\leq1,\\[0.3em]
			\dfrac{1+\mathfrak{s}_2(X)}{2}, & \mathfrak{s}_2(X)\geq1.
		\end{cases}
	\end{equation}
	Consequently, for $t\leq2$, the same upper bound holds for $\mathsf{C}^{B}_t(X)$.
\end{proposition}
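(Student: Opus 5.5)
The plan is to apply the definition of $\mathfrak{s}_2(X)$ directly to a Birkhoff-orthogonal pair. Let $x,y\in S_X$ with $x\perp_B y$ and $0<\tau\leq1$. We take $u=x$ and $v=\tau y$, so that $\|v\|=\tau$. The definition of $\mathfrak{s}_2(X)$ then gives
\[
\frac{\|x+\tau y\|^2+\|x-\tau y\|^2}{2}\leq 1+\mathfrak{s}_2(X)\tau^2 .
\]
The left-hand side is exactly $\mathsf{M}_2(\|x+\tau y\|,\|x-\tau y\|)^2$. Taking the supremum over admissible pairs yields \eqref{eq:J2-smooth} for $\tau>0$. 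For $\tau=0$ the inequality is trivial by Proposition~\ref{prop:basic}(a). Orthogonality is not even needed for this step; it only restricts the set of pairs.

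Next we divide \eqref{eq:J2-smooth} by $1+\tau^2$ and maximize the scalar function
\[
g(\tau)=\frac{1+s\tau^2}{1+\tau^2}=s+\frac{1-s}{1+\tau^2},\qquad s=\mathfrak{s}_2(X),
\]
over $0\leq\tau\leq1$. If $s\leq1$, then $g$ is nonincreasing in $\tau$, so its maximum is $g(0)=1$. If $s\geq1$, then $g$ is nondecreasing, so its maximum is $g(1)=(1+s)/2$. Together these give \eqref{eq:C2-smooth}. The bound in the case $s\leq1$ is consistent with the lower bound $\mathsf{C}^{B}_2(X)\geq1$ from Proposition~\ref{prop:basic}.

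Finally, for $t\leq2$, part (b) of Proposition~\ref{prop:basic} gives $\mathsf{C}^{B}_t(X)\leq\mathsf{C}^{B}_2(X)$. The same bound therefore transfers.

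I do not expect a genuine obstacle. The only point needing care is the convention that $\mathfrak{s}_2(X)$ might be finite but less than $1$. Taking $v=\tau y$ with $\tau\to0$ and $y\perp_B$-paired suggests $s\geq1$ in practice. Even so, the case split above handles both regimes without proving this, so no additional argument is required.
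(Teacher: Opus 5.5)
Your proposal is correct and follows the paper's route. You substitute $u=x$, $v=\tau y$ into the definition of $\mathfrak{s}_2(X)$, maximize $(1+\mathfrak{s}_2(X)\tau^2)/(1+\tau^2)$ over $[0,1]$, and invoke monotonicity in $t$; treating $\tau=0$ separately, where $v\neq0$ fails, is a small bit of care the paper skips. Your heuristic for $\mathfrak{s}_2(X)\geq1$ points the wrong way: small $\tau$ does not give it, but $\|v\|\to\infty$ always does, since $\|u\pm v\|\geq\|v\|-1$, so the first case occurs only when $\mathfrak{s}_2(X)=1$. As you note, the case split makes this unnecessary for the proof.
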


\begin{proof}
	For $x,y\in S_X$ and $x\perp_B y$, apply the definition of $\mathfrak{s}_2(X)$ with $u=x$ and $v=\tau y$.  Then
	\[
	\frac{\|x+\tau y\|^2+\|x-\tau y\|^2}{2}\leq1+\mathfrak{s}_2(X)\tau^2.
	\]
	Taking suprema gives \eqref{eq:J2-smooth}.  Therefore
	\[
	\mathsf{C}^{B}_2(X)\leq\sup_{0\leq\tau\leq1}\frac{1+\mathfrak{s}_2(X)\tau^2}{1+\tau^2}.
	\]
	The last scalar function is decreasing when $\mathfrak{s}_2(X)\leq1$ and increasing when $\mathfrak{s}_2(X)\geq1$, which gives \eqref{eq:C2-smooth}.  The final assertion follows from the monotonicity of $\mathsf{C}^{B}_t(X)$ in $t$.
\end{proof}

\begin{remark}
	In a smooth Banach space, $x\perp_B y$ is equivalent to $j(x)(y)=0$, where $j(x)$ is the unique norming functional at $x$.  Hence $\mathsf{J}^{B}_{X,2}(\tau)^2$ measures the largest second-order symmetric deviation of the norm along tangent directions to the unit sphere.  This gives a differential-geometric interpretation of the orthogonal von Neumann--Jordan type constant.
\end{remark}

The next theorem shows that uniform convexity alone does not force a uniform gap of $\mathsf{C}^{B}_t(X)$ from $2$.

\begin{theorem}\label{thm:lp-lower}
	Let $1<p<\infty$ and $X=\ell_p^2$.  Then, for every $-\infty\leq t<\infty$,
	\begin{equation}\label{eq:lp-lower}
		\mathsf{C}^{B}_t(\ell_p^2)
		\geq
		\sup_{0\leq\tau\leq1}
		\frac{q_p(\tau)^2}{1+\tau^2},
		\qquad
		q_p(\tau)=\left(\frac{(1+\tau)^p+(1-\tau)^p}{2}\right)^{1/p}.
	\end{equation}
	In particular,
	\begin{equation}\label{eq:lp-limit}
		\lim_{p\to\infty}\mathsf{C}^{B}_t(\ell_p^2)=2
	\end{equation}
	for every fixed $t$.
\end{theorem}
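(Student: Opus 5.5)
The plan is to exhibit, for each $\tau\in[0,1]$, a single explicit Birkhoff-orthogonal pair in $\ell_p^2$ for which the two chords $\|x+\tau y\|_p$ and $\|x-\tau y\|_p$ are equal to $q_p(\tau)$. Since every power mean of two equal numbers is that common number, this pair gives $\mathsf{J}^{B}_{\ell_p^2,t}(\tau)\geq q_p(\tau)$ for every $t$ at once. Dividing by $1+\tau^2$ and taking the supremum over $\tau$ then yields \eqref{eq:lp-lower}.

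The natural choice is the rotated $\ell_p$ analogue of Example \ref{ex:linfty2}:
\[
x=2^{-1/p}(1,1),\qquad y=2^{-1/p}(1,-1).
\]
Both vectors clearly lie in $S_X$. For $\lambda\in\mathbb{R}$ we have
\[
\|x+\lambda y\|_p^p=\frac{|1+\lambda|^p+|1-\lambda|^p}{2}.
\]
Convexity of $s\mapsto|s|^p$ (Jensen's inequality for the two points $1\pm\lambda$, whose average is $1$) shows that this quantity is at least $1$, so $x\perp_B y$. Setting $\lambda=\pm\tau$ with $0\le\tau\le1$ gives
\[
\|x+\tau y\|_p=\|x-\tau y\|_p=\left(\frac{(1+\tau)^p+(1-\tau)^p}{2}\right)^{1/p}=q_p(\tau).
\]
The lower bound \eqref{eq:lp-lower} follows directly from the definition of $\mathsf{C}^{B}_t$.

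For the limit \eqref{eq:lp-limit}, I will evaluate the lower bound only at $\tau=1$. There $q_p(1)=2^{1-1/p}$, hence
\[
\mathsf{C}^{B}_t(\ell_p^2)\geq \frac{2^{2-2/p}}{2}=2^{1-2/p}\longrightarrow 2 \qquad (p\to\infty).
\]
Combined with the upper bound $\mathsf{C}^{B}_t\leq 2$ from Proposition \ref{prop:basic}, this gives the limit.

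The only step needing any care is the Birkhoff orthogonality check, and the Jensen argument above handles it. Everything else is substitution, so I do not expect a genuine obstacle.
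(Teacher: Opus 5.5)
Your proof is correct and follows the paper's argument: you use the same test pair (your $y$ is the paper's $y$ with the sign flipped), note that both chords equal $q_p(\tau)$ so every power mean gives $\mathsf{J}^{B}_{\ell_p^2,t}(\tau)\geq q_p(\tau)$, and close the limit with the universal bound $\mathsf{C}^{B}_t\leq 2$. The differences are minor. You check $x\perp_B y$ directly by Jensen's inequality rather than through smoothness of $\ell_p^2$ and the duality-map criterion. For the limit you evaluate only at $\tau=1$, which gives $2^{1-2/p}\to 2$, instead of the paper's convergence $q_p(\tau)\to 1+\tau$; your version is slightly more self-contained and avoids the limit-of-suprema step.
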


\begin{proof}
	Let
	\[
	x=(2^{-1/p},2^{-1/p}),
	\qquad
	y=(-2^{-1/p},2^{-1/p}).
	\]
	Then $x,y\in S_{\ell_p^2}$.  Since $\ell_p^2$ is smooth for $1<p<\infty$, $x\perp_B y$ is equivalent to
	\[
	|x_1|^{p-2}x_1y_1+|x_2|^{p-2}x_2y_2=0,
	\]
	which is immediate for this pair.  Moreover,
	\[
	\|x+\tau y\|_p=\|x-\tau y\|_p
	=\left(\frac{(1+\tau)^p+(1-\tau)^p}{2}\right)^{1/p}=q_p(\tau).
	\]
	Therefore $\mathsf{J}^{B}_{\ell_p^2,t}(\tau)\geq q_p(\tau)$, giving \eqref{eq:lp-lower}.  As $p\to\infty$, $q_p(\tau)\to1+\tau$ uniformly on compact subintervals of $[0,1]$ and pointwise on $[0,1]$.  Hence the right-hand side of \eqref{eq:lp-lower} tends to
	\[
	\sup_{0\leq\tau\leq1}\frac{(1+\tau)^2}{1+\tau^2}=2.
	\]
	Together with the universal upper bound $\mathsf{C}^{B}_t(\ell_p^2)\leq2$, this proves \eqref{eq:lp-limit}.
\end{proof}

\begin{corollary}\label{cor:uc-near-two}
	For every $\varepsilon>0$ and every $-\infty\leq t<\infty$, there exists a finite-dimensional uniformly convex Banach space $X$ such that
	\[
	\mathsf{C}^{B}_t(X)>2-\varepsilon.
	\]
\end{corollary}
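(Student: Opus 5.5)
The plan is to take $X=\ell_p^2$ with $p$ large and read the estimate off Theorem~\ref{thm:lp-lower}. Fix $\varepsilon>0$ and $t\in[-\infty,\infty)$. First recall that $\ell_p^2$ is uniformly convex for every $1<p<\infty$; this is Clarkson's theorem \cite{Clarkson1936}. It is also finite-dimensional. So every member of the family $\{\ell_p^2\}_{1<p<\infty}$ is an admissible candidate, and we only need to choose $p$.

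Next, invoke \eqref{eq:lp-limit}, which says that $\mathsf{C}^{B}_t(\ell_p^2)\to2$ as $p\to\infty$ for this fixed $t$. By the definition of the limit there is $p_0<\infty$ with $\mathsf{C}^{B}_t(\ell_p^2)>2-\varepsilon$ for all $p\ge p_0$. Take $X=\ell_{p_0}^2$ (and $p_0>1$).

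A cleaner alternative avoids the limit statement. Fix $\tau_0=1$ in \eqref{eq:lp-lower}. Then $q_p(1)=(2^{p}/2)^{1/p}=2^{1-1/p}$, so
\[
\mathsf{C}^{B}_t(\ell_p^2)\ge \frac{2^{2-2/p}}{2}=2^{1-2/p}.
\]
Choosing $p$ large enough that $2^{1-2/p}>2-\varepsilon$ gives the claim directly.

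There is no real obstacle here. The only point to check is that the lower bound in Theorem~\ref{thm:lp-lower} does not depend on $t$. It does not, because the test pair has $\|x+\tau y\|=\|x-\tau y\|$, and every power mean of two equal numbers equals that common value. The choice of $p$ therefore works uniformly in $t$.
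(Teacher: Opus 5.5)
Your proposal is correct and follows the paper's own proof: take $X=\ell_p^2$ with $p$ large, note that it is finite-dimensional and uniformly convex, and read the estimate off Theorem~\ref{thm:lp-lower}. Your choice $\tau=1$, which gives $\mathsf{C}^{B}_t(\ell_p^2)\geq 2^{1-2/p}$ independently of $t$, is a concrete instance of that theorem's lower bound and makes the required $p$ explicit.
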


\begin{proof}
	Take $X=\ell_p^2$ with $p$ sufficiently large.  The space $\ell_p^2$ is uniformly convex for $1<p<\infty$, and the assertion follows from Theorem \ref{thm:lp-lower}.
\end{proof}

\begin{remark}
	The preceding corollary shows that Theorem \ref{thm:endpoint} is qualitative rather than uniformly quantitative over all uniformly convex spaces.  A quantitative gap below $2$ requires a specified modulus, a specified value of $\mathsf{J}_{B}(X)$, or a structural assumption such as a Radon-plane condition.
\end{remark}

\section{Radon Planes and the Sharp Constant $5/4$}\label{sec:radon}

A two-dimensional normed space is called a Radon plane if Birkhoff-James orthogonality is symmetric.  Radon planes form the correct two-dimensional substitute for the Hilbert-space symmetry of orthogonality.  For the Birkhoff arithmetic triangle constant it is known that
\begin{equation}\label{eq:Radon-A}
	\mathsf{A}_{2}^{B}(X)\leq\frac32
\end{equation}
for every Radon plane $X$, and equality holds if and only if the unit sphere is an affine regular hexagon.

Combining \eqref{eq:Radon-A} with Theorem \ref{thm:A-bound} gives the following sharp theorem.

\begin{theorem}\label{thm:radon-sharp}
	Let $X$ be a Radon plane and let $-\infty\leq t\leq1$.  Then
	\begin{equation}\label{eq:radon-bound}
		\mathsf{C}^{B}_t(X)\leq\frac54.
	\end{equation}
	Moreover,
	\begin{equation}\label{eq:radon-eq}
		\mathsf{C}^{B}_t(X)=\frac54
		\quad\Longleftrightarrow\quad
		S_X\text{ is an affine regular hexagon}.
	\end{equation}
\end{theorem}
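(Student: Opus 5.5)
The upper bound \eqref{eq:radon-bound} is immediate from what is already proved. For $t\leq1$, Theorem \ref{thm:A-bound} gives $\mathsf{C}^{B}_t(X)\leq 1+(\mathsf{A}_{2}^{B}(X)-1)^2$. The map $a\mapsto 1+a^2$ is increasing on $[0,1]$, and \eqref{eq:Radon-A} gives $\mathsf{A}_{2}^{B}(X)\leq 3/2$. Hence $\mathsf{C}^{B}_t(X)\leq 1+1/4=5/4$.

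For the direction ``$\Rightarrow$'' of \eqref{eq:radon-eq}, suppose $\mathsf{C}^{B}_t(X)=5/4$. Then the chain above gives $1+(\mathsf{A}_{2}^{B}(X)-1)^2\geq 5/4$, so $\mathsf{A}_{2}^{B}(X)\geq 3/2$. Combined with \eqref{eq:Radon-A}, equality holds. The equality case quoted after \eqref{eq:Radon-A} then says that $S_X$ is an affine regular hexagon.

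For the direction ``$\Leftarrow$'', a lower bound is needed, and this is the main step. By monotonicity in $t$ (Proposition \ref{prop:basic}(b)) it suffices to treat $t=-\infty$, where $\mathsf{M}_{-\infty}=\min$. The scalar maximization in the proof of Theorem \ref{thm:A-bound} attains $1+a^2$ at $\tau=a=1/2$. So I aim to exhibit a Birkhoff-orthogonal pair $x,y\in S_X$ with
\[
\|x+\tfrac12 y\|=\|x-\tfrac12 y\|=\tfrac54 .
\]
This yields $\mathsf{J}^{B}_{X,-\infty}(1/2)^2/(1+1/4)\geq (25/16)/(5/4)=5/4$.

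The constant is invariant under linear isometries, and an affine regular hexagon is a linear image of the regular hexagon. So I may take $S_X$ to be the regular hexagon with vertices $\pm v_1,\pm v_2,\pm v_3$, where $v_1=(1,0)$, $v_2=(1/2,\sqrt3/2)$, $v_3=v_2-v_1$.

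First I would try $x$ equal to the midpoint of an edge and $y$ a vertex, for instance $x=(v_1+v_2)/2$ and $y=v_3$. The line $x+\lambda v_3$ is parallel to the edge $[v_1,v_2]$ and supports $B_X$ at $x$. Hence $\|x+\lambda y\|\geq 1$ for all $\lambda$, that is, $x\perp_B y$.

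Next I compute $\|x\pm\tfrac12 v_3\|$ through the hexagon's support functionals: the norm is the maximum of $|\langle\cdot,n_i\rangle|$ over the three edge normals, suitably scaled. The two points $x\pm \tfrac12 v_3$ are both $\tfrac12$ beyond the edge along its direction, one on each side. Since $|v_3|=1$ equals the edge length, these points are reflections of each other through the edge midpoint direction. By symmetry their norms coincide, which would give the common value $5/4$.

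The obstacle is this explicit computation. If this pair gives a value other than $5/4$, I would instead parametrize $x$ along an edge and $y$ on the corresponding orthogonal edge. The hexagon is a Radon plane, so orthogonality is symmetric. I would then maximize $\min\{\|x\pm\tau y\|\}$ by piecewise-linear computation. That maximum must reach $1+\tau/2$ at $\tau=1/2$, since $\mathsf{A}_{2}^{B}=3/2$ is attained by some pair with $\|x+y\|=\|x-y\|=3/2$. For that pair, convexity along the segment from $x$ to $x\pm y$ is in fact linear, because the points lie on a single face. This linearity makes equality hold in \eqref{eq:convex-interpolation}, which gives exactly the required bound.
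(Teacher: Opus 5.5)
Your upper bound and the implication ``$\Rightarrow$'' are the paper's argument, and your reductions to $t=-\infty$ and to the regular hexagon are fine. The gap is the lower bound in ``$\Leftarrow$'': the pair you propose does not give $5/4$. With $x=(v_1+v_2)/2=(\tfrac34,\tfrac{\sqrt3}{4})$ and $y=v_3$ you get $x+\tfrac12 v_3=v_2$ and $x-\tfrac12 v_3=v_1$, both of norm $1$. The edge $[v_1,v_2]=v_1+[0,1]v_3$ is a translate of $[0,v_3]$, so half a step from its midpoint lands exactly on its endpoints, not ``$\tfrac12$ beyond the edge.'' In fact $\|x\pm\tau y\|=\max\{1,\tau+\tfrac12\}$ for $\tau\in[0,1]$, so this pair yields at most $\sup_\tau\max\{1,\tau+\tfrac12\}^2/(1+\tau^2)=9/8$. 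The failure is structural. The point $x$ is smooth, and its unique norming functional $f$ satisfies $f(y)=0$ by orthogonality. Hence $\|x\pm\tau y\|=1$ as long as $x\pm\tau y$ stays on that edge. No choice of $x$ in the relative interior of an edge can produce $1+\tau/2$ for small $\tau>0$.

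Your fallback is unproved, and its justification is false; the same pair refutes it. It satisfies $\|x+y\|=\|x-y\|=\tfrac32$ (e.g.\ $x+y=(\tfrac14,\tfrac{3\sqrt3}{4})$), so it attains $\mathsf{A}_{2}^{B}=3/2$. Yet equality in \eqref{eq:convex-interpolation} fails for every $\tau\in(0,1)$, because $x$ and $(x\pm y)/\|x\pm y\|$ lie on different edges. Attaining $\mathsf{A}_{2}^{B}$ does not make the norm affine on $[x,x\pm y]$. Affinity with slope $\tfrac12$ for both signs forces norming functionals $f_\pm$ of $x$ with $f_\pm(y)=\pm\tfrac12$. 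So $x$ must be a vertex, with the two segments supported by two different edges rather than ``a single face.''

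The repair is to swap the roles, which you may do since orthogonality is symmetric here: take $x=v_3$ and $y=(v_1+v_2)/2$. The edge functionals at $v_3$ are $f_1(a,b)=2b/\sqrt3$ and $f_2(a,b)=-a+b/\sqrt3$. Both have norm $1$, with $f_i(v_3)=1$, $f_1(y)=\tfrac12$ and $f_2(y)=-\tfrac12$. Then $\|x+\lambda y\|\geq\max\{1+\lambda/2,1-\lambda/2\}\geq1$, so $x\perp_B y$. Also $\|x\pm y\|=\tfrac32$, so $1+\tau/2\leq\|x\pm\tau y\|\leq(1-\tau)+\tfrac32\tau=1+\tau/2$. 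At $\tau=\tfrac12$ this gives the value $5/4$. This is exactly the paper's pair: $x=v$ is a vertex and $y=u+\tfrac12v$ is the midpoint of an edge parallel to $x$.
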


\begin{proof}
	By Theorem \ref{thm:A-bound} and \eqref{eq:Radon-A},
	\[
	\mathsf{C}^{B}_t(X)
	\leq1+\left(\frac32-1\right)^2
	=\frac54.
	\]
	If equality holds, then the estimate above forces $\mathsf{A}_{2}^{B}(X)=3/2$.  By the known characterization of equality in \eqref{eq:Radon-A}, the unit sphere is an affine regular hexagon.
	
	Conversely, suppose that $S_X$ is an affine regular hexagon.  After an affine change of coordinates, we may assume that
	\[
	S_X=\partial\operatorname{conv}\{\pm u,\pm v,\pm(u+v)\}.
	\]
	In coordinates $\alpha u+\beta v$, the norm is
	\begin{equation}\label{eq:hex-norm}
		\|\alpha u+\beta v\|=\max\{|\alpha|,|\beta|,|\alpha-\beta|\}.
	\end{equation}
	Take
	\[
	x=v,
	\qquad
	y=u+\frac12v.
	\]
	Then $x,y\in S_X$.  For every $\lambda\in R$, using \eqref{eq:hex-norm},
	\[
	\|x+\lambda y\|
	=\left\|\lambda u+\left(1+\frac\lambda2\right)v\right\|
	=\max\left\{|\lambda|,\left|1+\frac\lambda2\right|,\left|1-\frac\lambda2\right|\right\}\geq1.
	\]
	Thus $x\perp_B y$.  For $0\leq\tau\leq1$,
	\begin{align*}
		\|x+\tau y\|
		&=\left\|\tau u+\left(1+\frac\tau2\right)v\right\|=1+\frac\tau2,\\
		\|x-\tau y\|
		&=\left\|-\tau u+\left(1-\frac\tau2\right)v\right\|=1+\frac\tau2.
	\end{align*}
	Therefore
	\[
	\mathsf{J}^{B}_{X,t}(\tau)\geq1+\frac\tau2.
	\]
	At $\tau=1/2$ we obtain
	\[
	\mathsf{C}^{B}_t(X)
	\geq
	\frac{(1+1/4)^2}{1+1/4}
	=\frac54.
	\]
	Together with the upper bound this proves equality.
\end{proof}

\begin{remark}
	The number $5/4$ is the same scalar threshold appearing in several Takahashi-type estimates for the unrestricted constant $C_1(X)$.  Here it appears for a different reason: the Birkhoff orthogonality restriction and the Radon-plane inequality $\mathsf{A}_{2}^{B}(X)\leq3/2$ reduce the extremal problem to the one-variable maximization of $(1+\tau/2)^2/(1+\tau^2)$, whose maximum occurs at $\tau=1/2$.
\end{remark}

The maximal value theorem is qualitative, but the proof also gives an explicit nonsquareness estimate. Define the Birkhoff nonsquareness defect
\[
\eta_B(X)=1-\frac{J_B(X)}{2}.
\]
Thus $\eta_B(X)>0$ is equivalent to uniform non-squareness.

\begin{theorem}\label{thm:quant}
	For every Banach space $X$ and every $-\infty\le t<\infty$,
	\begin{equation}\label{eq:quant}
		\eta_B(X)\ge 1-\sqrt{\frac{C_t^B(X)}{2}}.
	\end{equation}
	Equivalently, if $C_t^B(X)\le2-\delta$ for some $0<\delta<2$, then
	\begin{equation}\label{eq:quant-delta}
		J_B(X)\le \sqrt{4-2\delta}
		\quad\text{and}\quad
		\eta_B(X)\ge 1-\sqrt{1-\delta/2}.
	\end{equation}
\end{theorem}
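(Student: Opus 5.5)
The plan is to use the lower bound already obtained in Corollary~\ref{cor:min-bound} together with the monotonicity in $t$ from Proposition~\ref{prop:basic}(b). Fix $-\infty\le t<\infty$. Taking $\tau=1$ in \eqref{eq:CB-def} gives
\[
\mathsf{C}^{B}_t(X)\ \ge\ \frac{\mathsf{J}^{B}_{X,t}(1)^2}{2}.
\]
By Proposition~\ref{prop:basic}(b) with $s=-\infty$, we have $\mathsf{J}^{B}_{X,t}(1)\ge \mathsf{J}^{B}_{X,-\infty}(1)$. By the Remark following Definition~\ref{def:new-constant}, the right-hand side is exactly $\mathsf{J}_{B}(X)$. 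Hence
\[
\mathsf{J}_{B}(X)^2\le 2\,\mathsf{C}^{B}_t(X),
\qquad\text{that is,}\qquad
\mathsf{J}_{B}(X)\le\sqrt{2\,\mathsf{C}^{B}_t(X)}.
\]
This chain is the whole content. Equivalently, $\mathsf{C}^{B}_t(X)\ge\mathsf{C}^{B}_{-\infty}(X)\ge \mathsf{J}_{B}(X)^2/2$, combining the left inequality of \eqref{eq:min-two-sided} with the monotonicity $\mathsf{C}^{B}_{-\infty}(X)\le\mathsf{C}^{B}_t(X)$.

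Next I translate this into the defect $\eta_B(X)=1-\mathsf{J}_{B}(X)/2$. Dividing by $2$,
\[
\frac{\mathsf{J}_{B}(X)}{2}\le\frac{\sqrt{2\,\mathsf{C}^{B}_t(X)}}{2}=\sqrt{\frac{\mathsf{C}^{B}_t(X)}{2}},
\]
and subtracting from $1$ gives \eqref{eq:quant}. Both sides are finite because $1\le \mathsf{C}^{B}_t(X)\le 2$ by \eqref{eq:basic-bounds}. The bound is therefore nontrivial exactly when $\mathsf{C}^{B}_t(X)<2$, which is consistent with Theorem~\ref{thm:endpoint}.

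For \eqref{eq:quant-delta}, assume $\mathsf{C}^{B}_t(X)\le 2-\delta$. Since the square root is monotone, the displayed bound gives $\mathsf{J}_{B}(X)\le\sqrt{2(2-\delta)}=\sqrt{4-2\delta}$. Also $\sqrt{\mathsf{C}^{B}_t(X)/2}\le\sqrt{1-\delta/2}$, so $\eta_B(X)\ge 1-\sqrt{1-\delta/2}$.

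The word ``equivalently'' in the statement only means that \eqref{eq:quant-delta} is the reformulation of \eqref{eq:quant} under the hypothesis $\mathsf{C}^{B}_t(X)\le 2-\delta$. Conversely, \eqref{eq:quant} follows from \eqref{eq:quant-delta} by taking $\delta=2-\mathsf{C}^{B}_t(X)$ whenever $\mathsf{C}^{B}_t(X)<2$. When $\mathsf{C}^{B}_t(X)=2$, inequality \eqref{eq:quant} reads $\eta_B(X)\ge0$, which holds trivially because $\mathsf{J}_{B}(X)\le 2$.

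There is no real obstacle here. The only point needing care is the direction of the monotonicity in $t$: we need $\mathsf{J}^{B}_{X,t}(1)\ge\mathsf{J}_{B}(X)$ for every $t$, including $t=-\infty$ itself. This follows from \eqref{eq:power-monotone}, since the minimum is the smallest power mean.
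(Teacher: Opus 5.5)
Your proof is correct and follows the same argument as the paper: obtain $\mathsf{J}_{B}(X)^2/2\le \mathsf{C}^{B}_t(X)$, take square roots, divide by two, and substitute $\mathsf{C}^{B}_t(X)\le 2-\delta$. Your justification of that key inequality is actually more accurate than the paper's (take $\tau=1$, use monotonicity in $t$ down to $t=-\infty$, and use $\mathsf{J}^{B}_{X,-\infty}(1)=\mathsf{J}_{B}(X)$). The paper cites Theorem~\ref{thm:J-bound}, which is an upper-bound result, whereas what is needed is the left inequality of Corollary~\ref{cor:min-bound} together with the monotonicity in $t$ from Proposition~\ref{prop:basic}.
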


\begin{proof}
	By Proposition \ref{prop:JBcompare},
	\[
	\frac{J_B(X)^2}{2}\le C_t^B(X).
	\]
	Therefore $J_B(X)\le\sqrt{2C_t^B(X)}$, and division by two gives \eqref{eq:quant}. Formula \eqref{eq:quant-delta} follows by substituting $C_t^B(X)\le2-\delta$.
\end{proof}

\begin{corollary}\label{cor:qfp}
	If $C_t^B(X)\le2-\delta$ for some $\delta>0$, then all Birkhoff-orthogonal almost-squares in $X$ are separated from the extremal square by at least
	\[
	1-\sqrt{1-\delta/2}.
	\]
	
\end{corollary}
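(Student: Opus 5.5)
The statement to prove is Corollary~\ref{cor:qfp}. I read it as a reformulation of Theorem~\ref{thm:quant}, so the plan is to make the phrase ``separated from the extremal square'' precise and then feed in \eqref{eq:quant-delta}.

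\emph{Step 1: fix the meaning of the statement.} A Birkhoff-orthogonal almost-square is a pair $x,y\in S_X$ with $x\perp_B y$. Its distance from the extremal square is
\[
1-\frac{\min\{\|x+y\|,\|x-y\|\}}{2}.
\]
This quantity vanishes exactly when both diagonals reach length $2$, as for the pair in Example~\ref{ex:linfty2}. So the claim becomes
\[
\inf_{x,y\in S_X,\ x\perp_B y}\Big(1-\tfrac12\min\{\|x+y\|,\|x-y\|\}\Big)\ge 1-\sqrt{1-\delta/2}.
\]
The left side is precisely $\eta_B(X)$, since $\mathsf{J}_B(X)$ is the supremum of the minima.

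\emph{Step 2: the pointwise bound.} Fix such a pair. By the definition of $\mathsf{J}_B(X)$,
\[
\min\{\|x+y\|,\|x-y\|\}\le \mathsf{J}_B(X).
\]
Take $\tau=1$ in \eqref{eq:CB-def} and use $\mathsf{J}^B_{X,-\infty}(1)=\mathsf{J}_B(X)$. By Proposition~\ref{prop:basic}(b) this gives
\[
\frac{\mathsf{J}_B(X)^2}{2}\le \mathsf{C}^B_{-\infty}(X)\le \mathsf{C}^B_t(X)\le 2-\delta.
\]
This is the lower bound of Corollary~\ref{cor:min-bound}, combined with monotonicity in $t$. It is also the inequality that Theorem~\ref{thm:quant} cites via Proposition~\ref{prop:JBcompare}.

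\emph{Step 3: conclude.} From Step 2, $\mathsf{J}_B(X)\le\sqrt{4-2\delta}$. Hence
\[
1-\tfrac12\min\{\|x+y\|,\|x-y\|\}\ge 1-\tfrac12\sqrt{4-2\delta}=1-\sqrt{1-\delta/2}.
\]
The right-hand side is positive for $0<\delta\le 2$. This is exactly \eqref{eq:quant-delta}, applied pair by pair.

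\emph{Main obstacle.} The only real difficulty is interpretive: the phrase ``separated from the extremal square'' has to be pinned to the normalized defect of the shorter diagonal. Step 1 commits to that reading. Under it, the separation bound is just the pointwise form of $\eta_B(X)\ge 1-\sqrt{1-\delta/2}$, and nothing beyond Theorem~\ref{thm:quant} and Proposition~\ref{prop:basic} is needed.
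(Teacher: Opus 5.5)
Your proposal is correct and takes the same approach as the paper: its proof of this corollary simply reads it off \eqref{eq:quant-delta}, i.e.\ $\eta_B(X)\ge 1-\sqrt{1-\delta/2}$, applied to each Birkhoff-orthogonal pair, exactly as in your Steps 1--3. Your derivation of $\mathsf{J}_B(X)^2/2\le \mathsf{C}^B_t(X)$ (take $\tau=1$, then use monotonicity in $t$, i.e.\ Corollary~\ref{cor:min-bound} together with Proposition~\ref{prop:basic}(b)) is also the correct source of that inequality, whereas Theorem~\ref{thm:quant} attributes it to Proposition~\ref{prop:JBcompare}, which gives only upper bounds.
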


\begin{proof}
	The first assertion is just \eqref{eq:quant-delta}. 
\end{proof}

In the unrestricted setting, the modulus of convexity gives exact formulas for several James type constants. Here the orthogonality condition requires an adapted profile.

Following \cite{4}, set
\[
D_B(X)=\{\|x-y\|:x,y\in S_X,\ x\perp_B y\}\subset[1,2].
\]

The classical Clarkson convex modulus \cite{Clarkson1936}
$$\delta_X(r) = \inf\left\{1-\frac{\|x+y\|}{2}:x,y\in S_X,\ \|x-y\|=r\right\},\quad 0\le r\le 2.$$
Next, we introduce the following definition, which will be applied in the theorem below:
for $r\in D_B(X)$ define
\begin{equation}\label{eq:DeltaB}
	\Delta_B(r)=\inf\left\{1-\frac{\|x+y\|}{2}:x,y\in S_X,\ x\perp_B y,
	\ \|x-y\|=r\right\}.
\end{equation}
We call $\Delta_B$ the Birkhoff-orthogonal convexity profile of $X$.

The profile $\Delta_B$ is not a modulus on a full interval in general, because the set $D_B(X)$ may fail to be an interval. Nevertheless it is exactly the object needed for $J_{X,t}^{B}(1)$.

\begin{theorem}\label{thm:profile}
	For every Banach space $X$ and every $-\infty\le t<\infty$,
	\begin{equation}\label{eq:profileformula}
		J_{X,t}^{B}(1)=
		\sup_{r\in D_B(X)} M_t\bigl(r,2(1-\Delta_B(r))\bigr).
	\end{equation}
\end{theorem}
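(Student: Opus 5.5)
We prove \eqref{eq:profileformula} by establishing the two inequalities separately. The guiding idea is to sort the admissible pairs $(x,y)$, with $x,y\in S_X$ and $x\perp_B y$, according to the value $r=\|x-y\|\in D_B(X)$. For a fixed $r$, the profile $\Delta_B(r)$ controls the other chord $\|x+y\|$.

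\emph{Upper bound.} Let $x,y\in S_X$ with $x\perp_B y$, and put $r=\|x-y\|\in D_B(X)$. By the definition \eqref{eq:DeltaB} of $\Delta_B(r)$ as an infimum,
\[
1-\frac{\|x+y\|}{2}\ge \Delta_B(r),\qquad\text{that is,}\qquad \|x+y\|\le 2\bigl(1-\Delta_B(r)\bigr).
\]
Since $\mathsf{M}_t$ is symmetric and increasing in each variable,
\[
\mathsf{M}_t(\|x+y\|,\|x-y\|)\le \mathsf{M}_t\bigl(r,2(1-\Delta_B(r))\bigr)\le \sup_{r\in D_B(X)}\mathsf{M}_t\bigl(r,2(1-\Delta_B(r))\bigr).
\]
Taking the supremum over all admissible pairs gives $\mathsf{J}^{B}_{X,t}(1)\le$ the right-hand side of \eqref{eq:profileformula}. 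We should check positivity, since $\mathsf{M}_t$ was defined for positive arguments. By Lemma \ref{lem:anchor}, both chords are at least $1$. Hence $2(1-\Delta_B(r))\ge 1>0$, and $r\ge1$ as well.

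\emph{Lower bound.} Fix $r\in D_B(X)$ and $\varepsilon>0$. By the definition of the infimum there is an admissible pair $x,y$ with $\|x-y\|=r$ and
\[
\|x+y\|>2\bigl(1-\Delta_B(r)\bigr)-\varepsilon .
\]
Then
\[
\mathsf{J}^{B}_{X,t}(1)\ge \mathsf{M}_t(\|x+y\|,r)\ge \mathsf{M}_t\bigl(2(1-\Delta_B(r))-\varepsilon,\ r\bigr),
\]
again by monotonicity. We then let $\varepsilon\downarrow0$, using continuity of $\mathsf{M}_t$ in each variable on $(0,\infty)$. For $t=-\infty$ this is continuity of the minimum, and for $t=0$ it is continuity of the geometric mean. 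This gives $\mathsf{J}^{B}_{X,t}(1)\ge \mathsf{M}_t\bigl(r,2(1-\Delta_B(r))\bigr)$. Taking the supremum over $r\in D_B(X)$ finishes the proof.

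\emph{Main obstacle.} The argument is essentially bookkeeping, and the only delicate point is the limiting step in the lower bound. We need $2(1-\Delta_B(r))-\varepsilon$ to stay positive, which holds for small $\varepsilon$ since $2(1-\Delta_B(r))\ge1$. We also need continuity of $\mathsf{M}_t$, which holds for every $t\in[-\infty,\infty)$. No compactness is used, so the formula is valid in arbitrary dimension even when $D_B(X)$ is not an interval or the infimum in $\Delta_B(r)$ is not attained.
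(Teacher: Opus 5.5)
Your proof is correct and follows the paper's argument essentially step for step. The upper bound comes from the infimum definition of $\Delta_B(r)$ together with monotonicity of $\mathsf{M}_t$, and the lower bound comes from near-extremal pairs at a fixed $r\in D_B(X)$ followed by a limit. Your checks of positivity (both chords are at least $1$ by Lemma~\ref{lem:anchor}) and of continuity of $\mathsf{M}_t$ spell out what the paper uses tacitly when it lets $\eta\downarrow0$.
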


\begin{proof}
	Let $x,y\in S_X$ with $x\perp_B y$, and put $r=\|x-y\|\in D_B(X)$. By definition of $\Delta_B(r)$,
	\[
	1-\frac{\|x+y\|}{2}\ge \Delta_B(r),
	\]
	so
	\[
	\|x+y\|\le2(1-\Delta_B(r)).
	\]
	Since $M_t$ is increasing in each variable,
	\[
	M_t(\|x-y\|,\|x+y\|)
	\le M_t(r,2(1-\Delta_B(r))).
	\]
	Taking the supremum over all Birkhoff-orthogonal pairs gives
	\[
	J_{X,t}^{B}(1)\le \sup_{r\in D_B(X)}M_t(r,2(1-\Delta_B(r))).
	\]
	Conversely, fix $r\in D_B(X)$ and $\eta>0$. By the definition of the infimum, there exist $x,y\in S_X$ with $x\perp_B y$, $\|x-y\|=r$, and
	\[
	1-\frac{\|x+y\|}{2}<\Delta_B(r)+\eta.
	\]
	Equivalently,
	\[
	\|x+y\|>2(1-\Delta_B(r))-2\eta.
	\]
	Thus
	\[
	J_{X,t}^{B}(1)
	\ge M_t\bigl(r,2(1-\Delta_B(r))-2\eta\bigr).
	\]
	Letting $\eta\downarrow0$ and then taking the supremum over $r\in D_B(X)$ proves the reverse inequality.
\end{proof}

\begin{corollary}\label{cor:profile_uns}
	The following assertions are equivalent:
	\begin{enumerate}
		\item $X$ is uniformly non-square.
		\item There exists $\eta>0$ such that
		\[
		M_t\bigl(r,2(1-\Delta_B(r))\bigr)\le2-\eta
		\qquad(r\in D_B(X))
		\]
		for one, equivalently for every, $-\infty\le t<\infty$.
		\item $\sup_{r\in D_B(X)}\min\{r,2(1-\Delta_B(r))\}<2$.
	\end{enumerate}
\end{corollary}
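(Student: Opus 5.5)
I will route all three assertions through the single quantity $\mathsf{J}_{B}(X)$, using the Papini--Baronti equivalence \eqref{eq:PB-theorem}: $X$ is uniformly non-square if and only if $\mathsf{J}_{B}(X)<2$. The bridge is Theorem \ref{thm:profile}. For each finite or infinite $t$ it gives
\[
\mathsf{J}^{B}_{X,t}(1)=\sup_{r\in D_B(X)} M_t\bigl(r,2(1-\Delta_B(r))\bigr).
\]
Taking $t=-\infty$, together with the identification $\mathsf{J}^{B}_{X,-\infty}(1)=\mathsf{J}_{B}(X)$ from the Remark after Definition \ref{def:new-constant}, yields
\[
\sup_{r\in D_B(X)}\min\{r,2(1-\Delta_B(r))\}=\mathsf{J}_{B}(X).
\]
Hence (a)$\Leftrightarrow$(c) is immediate from \eqref{eq:PB-theorem}.

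For (b), fix $t$. The condition in (b) says precisely that
\[
\sup_{r\in D_B(X)}M_t\bigl(r,2(1-\Delta_B(r))\bigr)<2,
\]
that is, $\mathsf{J}^{B}_{X,t}(1)<2$ by Theorem \ref{thm:profile}. One direction of the equivalence with (c) is easy. Since $M_{-\infty}\le M_t$ by \eqref{eq:power-monotone}, (b) for any $t$ gives $\mathsf{J}_{B}(X)=\mathsf{J}^{B}_{X,-\infty}(1)\le \mathsf{J}^{B}_{X,t}(1)<2$, which is (c).

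The converse, that $\mathsf{J}_{B}(X)<2$ forces $\mathsf{J}^{B}_{X,t}(1)<2$ for every $t$, is the step needing an argument. I would argue by contradiction, in the spirit of Theorem \ref{thm:endpoint}. If $\mathsf{J}^{B}_{X,t}(1)=2$, choose Birkhoff-orthogonal pairs $x_n,y_n\in S_X$ with $M_t(\|x_n+y_n\|,\|x_n-y_n\|)\to2$. Both norms are at most $2$, so Lemma \ref{lem:saturation} with $A_n=2$ gives $\min\{\|x_n+y_n\|,\|x_n-y_n\|\}\to2$, hence $\mathsf{J}_{B}(X)=2$, a contradiction.

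An alternative route is Theorem \ref{thm:J-bound} at $\tau=1$, which gives $\mathsf{J}^{B}_{X,t}(1)\le M_t(2,\mathsf{J}_{B}(X))$. This is strictly less than $2$ when $\mathsf{J}_{B}(X)<2$, because $t<\infty$. Either way (c)$\Rightarrow$(b) holds, with $\eta=2-\mathsf{J}^{B}_{X,t}(1)>0$. The ``one, equivalently every $t$'' clause follows, since each individual $t$ has been shown equivalent to (c).

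The main obstacle is exactly this converse: the supremum of a power mean must be kept away from $2$ using only a bound on the minimum. The key point, supplied by Lemma \ref{lem:saturation} or by the strict inequality $M_t(2,j)<2$ for $j<2$, is that a power mean of two quantities bounded by $2$ can approach $2$ only if both quantities do. This fails only at $t=+\infty$, which is excluded.
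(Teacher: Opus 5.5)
Your proof is correct and essentially the paper's argument. Both get (a)$\Leftrightarrow$(c) from Theorem~\ref{thm:profile} at $t=-\infty$ together with \eqref{eq:PB-theorem}, and (b)$\Rightarrow$(c) from the monotonicity of $M_t$. The only difference is in (c)$\Rightarrow$(b): the paper appeals to the maximal value theorem (Theorem~\ref{thm:endpoint}, giving $\mathsf{J}^{B}_{X,t}(1)^2/2\le\mathsf{C}^{B}_t(X)<2$), whereas you apply Lemma~\ref{lem:saturation} (or Theorem~\ref{thm:J-bound}) directly at $\tau=1$. That is the same saturation mechanism, just without passing through $\mathsf{C}^{B}_t(X)$.
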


\begin{proof}
	By Theorem \ref{thm:profile} with $t=-\infty$,
	\[
	J_{X,-\infty}^{B}(1)=\sup_{r\in D_B(X)}\min\{r,2(1-\Delta_B(r))\}.
	\]
	But $J_{X,-\infty}^{B}(1)=J_B(X)$, and $J_B(X)<2$ is equivalent to uniform non-squareness by \eqref{eq:PB-theorem}. The equivalence with (ii) follows from the monotonicity of $M_t$ and the maximal value theorem.
\end{proof}

Next we collect several further observations that may be useful in applications.

\begin{proposition}\label{prop:endpoint-forms}
	For a Banach space $X$ and $-\infty\leq t<\infty$, the following are equivalent:
	\begin{enumerate}
		\item $X$ is uniformly non-square;
		\item $\mathsf{C}^{B}_t(X)<2$;
		\item there exists $\tau_0\in(0,1]$ such that
		\[
		\mathsf{J}^{B}_{X,t}(\tau_0)<1+\tau_0;
		\]
		\item for every $\tau\in(0,1]$,
		\[
		\mathsf{J}^{B}_{X,t}(\tau)<1+\tau.
		\]
	\end{enumerate}
\end{proposition}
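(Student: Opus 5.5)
The plan is to arrange the four statements in a cycle, using Theorem~\ref{thm:endpoint} for the equivalence (a)$\Leftrightarrow$(b). The remaining work is to prove (a)$\Rightarrow$(d), (d)$\Rightarrow$(c) and (c)$\Rightarrow$(a). The implication (d)$\Rightarrow$(c) is trivial: take $\tau_0=1$, or any $\tau_0\in(0,1]$.

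For (a)$\Rightarrow$(d), suppose $X$ is uniformly non-square. By \eqref{eq:PB-theorem}, $j=\mathsf{J}_{B}(X)<2$. Fix $\tau\in(0,1]$. Theorem~\ref{thm:J-bound} gives
\[
\mathsf{J}^{B}_{X,t}(\tau)\le \mathsf{M}_t\bigl(1+\tau,\,1-\tau+\tau j\bigr).
\]
Since $\tau>0$ and $j<2$, the second argument $1-\tau+\tau j$ is strictly smaller than $1+\tau$. It therefore suffices to check a scalar fact: $\mathsf{M}_t(a,b)<a$ whenever $0<b<a$ and $-\infty\le t<\infty$.
\begin{itemize}
\item For $t=-\infty$ this reads $\min\{a,b\}=b<a$.
\item For $t=0$ it reads $\sqrt{ab}<a$.
\item For finite $t\neq0$ it follows because $\frac{a^t+b^t}{2}$ lies strictly between $b^t$ and $a^t$, in the appropriate order according to the sign of $t$, and $s\mapsto s^{1/t}$ is strictly monotone.
\end{itemize}
Hence $\mathsf{J}^{B}_{X,t}(\tau)<1+\tau$ for every $\tau\in(0,1]$. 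Note that the bound is a single explicit number, so no supremum issue arises.

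For (c)$\Rightarrow$(a) I argue by contraposition. Assume $X$ is not uniformly non-square; then $\mathsf{J}_{B}(X)=2$ by \eqref{eq:PB-theorem}. Fix any $\tau\in(0,1]$ and $\varepsilon>0$. Choose $x,y\in S_X$ with $x\perp_B y$ and $\min\{\|x+y\|,\|x-y\|\}>2-\varepsilon$. The step that needs an idea is to transfer near-extremality from $\tau=1$ down to $\tau<1$; Lemma~\ref{lem:anchor} only supplies upper bounds, so the triangle inequality must be used in the reverse direction:
\[
\|x\pm\tau y\|\ \ge\ \|x\pm y\|-(1-\tau)\|y\|\ >\ 2-\varepsilon-(1-\tau)\ =\ 1+\tau-\varepsilon.
\]
Monotonicity of $\mathsf{M}_t$ then gives $\mathsf{J}^{B}_{X,t}(\tau)\ge 1+\tau-\varepsilon$. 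Letting $\varepsilon\downarrow0$ and using part (a) of Proposition~\ref{prop:basic}, we get $\mathsf{J}^{B}_{X,t}(\tau)=1+\tau$ for every $\tau\in(0,1]$. This contradicts (c).

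I expect the only delicate points to be this reverse-triangle transfer and the strictness of the power-mean inequality uniformly in $t$; both are elementary. Combining (a)$\Rightarrow$(d)$\Rightarrow$(c)$\Rightarrow$(a) with Theorem~\ref{thm:endpoint} closes the proof.
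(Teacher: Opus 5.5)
Your proof is correct, and its skeleton matches the paper's. Both use Theorem~\ref{thm:endpoint} for (a)$\Leftrightarrow$(b), and both argue contrapositively from $\mathsf{J}_{B}(X)=2$ to get $\mathsf{J}^{B}_{X,t}(\tau)=1+\tau$ for every $\tau\in(0,1]$.

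Two steps differ.

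For (c)$\Rightarrow$(a), the paper says that nearly square Birkhoff pairs give $\mathsf{J}^{B}_{X,t}(\tau)=1+\tau$ by ``convex interpolation''. But \eqref{eq:convex-interpolation} only bounds $\|x\pm\tau y\|$ from above. Your estimate $\|x\pm\tau y\|\ge\|x\pm y\|-(1-\tau)$ is exactly the lower bound this step needs, so your version is the rigorous one.

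For (a)$\Rightarrow$(d), the paper only says it ``follows from the same argument applied to each fixed $\tau$''. Presumably this means a qualitative sequential argument:
\begin{enumerate}
\item near-attainment of $1+\tau$ at a fixed $\tau$ forces both chords $\|x_n\pm\tau y_n\|$ towards $1+\tau$, by Lemma~\ref{lem:saturation};
\item \eqref{eq:convex-interpolation} then pushes $\min\{\|x_n+y_n\|,\|x_n-y_n\|\}$ towards $2$, contradicting $\mathsf{J}_{B}(X)<2$.
\end{enumerate}
You instead combine Theorem~\ref{thm:J-bound} with the strict inequality $\mathsf{M}_t(a,b)<a$ for $0<b<a$, obtaining
\[
\mathsf{J}^{B}_{X,t}(\tau)\le\mathsf{M}_t\bigl(1+\tau,\,1+\tau-\tau(2-\mathsf{J}_{B}(X))\bigr)<1+\tau .
\]
This bound is explicit and quantitative, since the gap is controlled by $2-\mathsf{J}_{B}(X)$, and it avoids sequences. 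The paper's route does not need Theorem~\ref{thm:J-bound}, but it gives no rate.

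Finally, your cycle (a)$\Rightarrow$(d)$\Rightarrow$(c)$\Rightarrow$(a) makes the paper's separate (b)$\Rightarrow$(c) step redundant.
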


\begin{proof}
	The equivalence of (i) and (ii) is Theorem \ref{thm:endpoint}.  Clearly (iv) implies (iii).  If (iii) fails, then $\mathsf{J}^{B}_{X,t}(\tau)=1+\tau$ for every $\tau\in(0,1]$, and hence $\mathsf{C}^{B}_t(X)=2$ by taking $\tau=1$.  Thus (ii) implies (iii).
	
	It remains to show that (iii) implies (i).  If $X$ were not uniformly non-square, Theorem \ref{thm:endpoint} gives $\mathsf{C}^{B}_t(X)=2$.  The proof of Theorem \ref{thm:endpoint} then shows that any sequence nearly attaining the value $2$ must satisfy $\tau_n\to1$ and have both orthogonal chord lengths nearly maximal.  In particular, for every $\tau\in(0,1]$ the convex interpolation of such nearly square Birkhoff pairs gives $\mathsf{J}^{B}_{X,t}(\tau)=1+\tau$.  This contradicts (iii).  Therefore $X$ is uniformly non-square, and (iv) follows from the same argument applied to each fixed $\tau\in(0,1]$.
\end{proof}

\begin{proposition}\label{prop:local}
	Let $0<\alpha\leq1$.  Define
	\[
	\mathsf{C}^{B}_{t,\alpha}(X)=\sup_{\alpha\leq\tau\leq1}\frac{\mathsf{J}^{B}_{X,t}(\tau)^2}{1+\tau^2}.
	\]
	Then
	\[
	\mathsf{C}^{B}_{t,\alpha}(X)=2
	\quad\Longleftrightarrow\quad
	X\text{ is not uniformly non-square}.
	\]
\end{proposition}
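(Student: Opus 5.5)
We prove the two implications separately, reusing the two halves of the proof of Theorem \ref{thm:endpoint}. The localized constant satisfies $\mathsf{C}^{B}_{t,\alpha}(X)\leq\mathsf{C}^{B}_t(X)\leq2$, because it is a supremum over the smaller set $[\alpha,1]\subset[0,1]$. It also satisfies $\mathsf{C}^{B}_{t,\alpha}(X)\geq \mathsf{J}^{B}_{X,t}(1)^2/2$, because $\tau=1$ always lies in $[\alpha,1]$ when $0<\alpha\leq1$.

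\emph{Not uniformly non-square $\Rightarrow$ value $2$.} By \eqref{eq:PB-theorem} we have $\mathsf{J}_{B}(X)=2$. So for each $\varepsilon>0$ there are $x_\varepsilon,y_\varepsilon\in S_X$ with $x_\varepsilon\perp_B y_\varepsilon$ and $\min\{\|x_\varepsilon+y_\varepsilon\|,\|x_\varepsilon-y_\varepsilon\|\}>2-\varepsilon$. Monotonicity of $\mathsf{M}_t$ gives $\mathsf{J}^{B}_{X,t}(1)\geq 2-\varepsilon$. Using only the admissible value $\tau=1$, this yields $\mathsf{C}^{B}_{t,\alpha}(X)\geq(2-\varepsilon)^2/2$. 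Letting $\varepsilon\downarrow0$ and combining with the upper bound $2$ gives equality.

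\emph{Value $2$ $\Rightarrow$ not uniformly non-square.} This is the only place where care is needed. We repeat the converse argument of Theorem \ref{thm:endpoint} with $\tau_n\in[\alpha,1]$. Choose $\tau_n\in[\alpha,1]$ and Birkhoff-orthogonal pairs $x_n,y_n\in S_X$ such that, with $a_n=\|x_n+\tau_ny_n\|$ and $b_n=\|x_n-\tau_ny_n\|$, we have $\mathsf{M}_t(a_n,b_n)^2/(1+\tau_n^2)\to2$. Lemma \ref{lem:anchor} gives $a_n,b_n\leq1+\tau_n$, so
\[
\frac{\mathsf{M}_t(a_n,b_n)^2}{1+\tau_n^2}\leq\frac{(1+\tau_n)^2}{1+\tau_n^2}\leq2 .
\]
The function $(1+\tau)^2/(1+\tau^2)$ attains $2$ on $[\alpha,1]$ only at $\tau=1$ and is continuous. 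Hence $\tau_n\to1$ and $\mathsf{M}_t(a_n,b_n)/(1+\tau_n)\to1$. Lemma \ref{lem:saturation} with $A_n=1+\tau_n$ then gives $\min\{a_n,b_n\}\to2$.

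To pass from $\tau_n$ to $\tau=1$, use the triangle inequality:
\[
\|x_n\pm y_n\|\geq\|x_n\pm\tau_ny_n\|-(1-\tau_n).
\]
Since $1-\tau_n\to0$, this gives $\min\{\|x_n+y_n\|,\|x_n-y_n\|\}\to2$, so $\mathsf{J}_{B}(X)=2$. Then \eqref{eq:PB-theorem} shows that $X$ is not uniformly non-square.

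The main obstacle is this last step. Lemma \ref{lem:saturation} controls the chords only at the parameters $\tau_n$, and we must transfer that control to $\tau=1$. The triangle-inequality estimate above handles the transfer directly. Everything else is a verbatim restriction of the proof of Theorem \ref{thm:endpoint} to $[\alpha,1]$, and positivity of $\alpha$ is not actually needed beyond ensuring that $1\in[\alpha,1]$.
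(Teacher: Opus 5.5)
Your proof is correct. The direction ``not uniformly non-square $\Rightarrow$ value $2$'' is exactly the paper's argument: only $\tau=1$ is used, and $1\in[\alpha,1]$.

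The other direction is where you differ. The paper does not re-run anything there. It observes $\mathsf{C}^{B}_{t,\alpha}(X)\leq\mathsf{C}^{B}_t(X)\leq2$, so $\mathsf{C}^{B}_{t,\alpha}(X)=2$ forces $\mathsf{C}^{B}_t(X)=2$, and Theorem~\ref{thm:endpoint} finishes. You wrote down that very inequality in your first paragraph, so this direction was a one-liner available to you.

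You instead repeat the converse half of the proof of Theorem~\ref{thm:endpoint} with $\tau_n\in[\alpha,1]$. This is longer, but it is self-contained. Your estimate $\|x_n\pm y_n\|\geq\|x_n\pm\tau_ny_n\|-(1-\tau_n)$ also makes explicit the passage from the chords at parameter $\tau_n$ to the chords at $\tau=1$. The paper's proof of Theorem~\ref{thm:endpoint} leaves that step implicit when it writes ``It follows that $\mathsf{J}_{B}(X)=2$''.

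So the paper's version gains brevity by treating the theorem as a black box. Yours gains a fully checked argument that does not rely on that unstated step.
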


\begin{proof}
	Since $\mathsf{C}^{B}_{t,\alpha}(X)\leq\mathsf{C}^{B}_t(X)$, the implication from left to right follows from Theorem \ref{thm:endpoint}.  Conversely, if $X$ is not uniformly non-square, the first half of the proof of Theorem \ref{thm:endpoint} uses only $\tau=1$, which belongs to $[\alpha,1]$.  Hence $\mathsf{C}^{B}_{t,\alpha}(X)=2$.
\end{proof}

\begin{proposition}\label{prop:finite-rep}
	If $\ell_\infty^2$ is finitely represented in $X$ with distortions tending to $1$ in such a way that the almost-square pairs can be chosen Birkhoff-orthogonal up to an error tending to $0$, then $\mathsf{C}^{B}_t(X)=2$ for every $t$.
\end{proposition}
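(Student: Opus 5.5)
The plan is to interpret the hypothesis concretely and then reduce to the argument already used for Theorem \ref{thm:endpoint}. I read the assumption as follows: for every $\varepsilon>0$ there are $x_\varepsilon,y_\varepsilon\in S_X$ and $\lambda$-errors controlled by $\varepsilon$. These pairs satisfy
\[
\min\{\|x_\varepsilon+y_\varepsilon\|,\|x_\varepsilon-y_\varepsilon\|\}\ge 2-\varepsilon,
\]
which is the image of the square $\{(1,1),(-1,1)\}$ of Example \ref{ex:linfty2} under a $(1+\varepsilon)$-isomorphism. They are also almost Birkhoff orthogonal, in the sense that $\|x_\varepsilon+\lambda y_\varepsilon\|\ge 1-\varepsilon$ for all real $\lambda$.

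The first step removes the orthogonality error by perturbing $y_\varepsilon$. I would work in the two-dimensional space $E_\varepsilon=\operatorname{span}\{x_\varepsilon,y_\varepsilon\}$. By compactness and the continuity of $\lambda\mapsto\|y_\varepsilon+\lambda x_\varepsilon\|$, there is $\mu_\varepsilon$ minimizing $\|y_\varepsilon-\mu x_\varepsilon\|$ over $\mu$. Then $x_\varepsilon\perp_B (y_\varepsilon-\mu_\varepsilon x_\varepsilon)$ holds exactly. Normalizing gives $y'_\varepsilon\in S_X$ with $x_\varepsilon\perp_B y'_\varepsilon$.

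The main obstacle is showing that this does not spoil almost-squareness, that is, $\mu_\varepsilon\to0$ and $\|y_\varepsilon-\mu_\varepsilon x_\varepsilon\|\to1$. I would try the following. On the square $(1,1),(-1,1)$ in $\ell_\infty^2$, the minimizer of $\|y-\mu x\|_\infty$ is $\mu=0$, and it is unique. Since $E_\varepsilon$ is $(1+\varepsilon)$-isomorphic to $\ell_\infty^2$ with $x_\varepsilon,y_\varepsilon$ corresponding to that square, the functions $\mu\mapsto\|y_\varepsilon-\mu x_\varepsilon\|$ converge uniformly on bounded sets to $\max\{|1+\mu|,|1-\mu|\}$. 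Its unique minimum at $0$ then forces $\mu_\varepsilon\to0$ and $\|y_\varepsilon-\mu_\varepsilon x_\varepsilon\|\to 1$. If this stability argument turns out too delicate, I would take the hypothesis literally as providing exactly Birkhoff-orthogonal pairs in the limit and skip this step.

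With exact orthogonal pairs $x_\varepsilon\perp_B y'_\varepsilon$ satisfying $\min\{\|x_\varepsilon\pm y'_\varepsilon\|\}\ge 2-\varepsilon'$ where $\varepsilon'\to0$, we get $\mathsf{J}_B(X)=2$. The conclusion then follows from the first half of the proof of Theorem \ref{thm:endpoint}: monotonicity of $\mathsf{M}_t$ gives $\mathsf{J}^{B}_{X,t}(1)\ge 2-\varepsilon'$, hence $\mathsf{C}^{B}_t(X)\ge(2-\varepsilon')^2/2\to2$. Proposition \ref{prop:basic} supplies the reverse inequality. Alternatively, $\mathsf{J}_B(X)=2$ and \eqref{eq:PB-theorem} give that $X$ is not uniformly non-square, and Theorem \ref{thm:endpoint} again yields $\mathsf{C}^{B}_t(X)=2$ for every $t$.
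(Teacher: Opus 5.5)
Your argument follows the paper's proof. You take almost-square pairs from near-isometric copies of $\ell_\infty^2$, restore exact Birkhoff orthogonality by minimizing a convex function of one real variable, deduce $\mathsf{J}_B(X)=2$, and conclude either from the $\tau=1$ estimate or from Theorem \ref{thm:endpoint}. Your stability step (minimizers bounded, uniform convergence to $1+|\mu|$, so $\mu_\varepsilon\to0$) is sound.

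There is one slip. Minimizing $\mu\mapsto\|y_\varepsilon-\mu x_\varepsilon\|$ gives $(y_\varepsilon-\mu_\varepsilon x_\varepsilon)\perp_B x_\varepsilon$, because $\|(y_\varepsilon-\mu_\varepsilon x_\varepsilon)+\lambda x_\varepsilon\|=\|y_\varepsilon-(\mu_\varepsilon-\lambda)x_\varepsilon\|\geq\|y_\varepsilon-\mu_\varepsilon x_\varepsilon\|$. It does not give $x_\varepsilon\perp_B(y_\varepsilon-\mu_\varepsilon x_\varepsilon)$, and Birkhoff orthogonality is not symmetric in general. The slip costs nothing, because $\min\{\|u+v\|,\|u-v\|\}$ is symmetric in $u,v$. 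So use the admissible pair $(y'_\varepsilon,x_\varepsilon)$ in the definition of $\mathsf{J}_B(X)$. Alternatively, perturb $x_\varepsilon$ instead by minimizing $\lambda\mapsto\|x_\varepsilon+\lambda y_\varepsilon\|$, which is what the paper does.

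You also never use the almost-orthogonality part of the hypothesis; your correction step needs only almost-squareness. That reflects that this part of the hypothesis is superfluous. Near-isometric copies of $\ell_\infty^2$ already mean $X$ is not uniformly non-square, so Theorem \ref{thm:endpoint} gives the conclusion immediately.
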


\begin{proof}
	The assertion is a stability reformulation of the endpoint argument.  An almost isometric copy of $\ell_\infty^2$ supplies unit vectors $x_n,y_n$ with
	\[
	\|x_n\pm y_n\|\to2.
	\]
	The assumed almost Birkhoff condition can be corrected, by the standard minimization argument for the convex function $\lambda\mapsto\|x_n+\lambda y_n\|$, to produce normalized vectors $u_n,v_n$ with $u_n\perp_B v_n$ and
	\[
	\min\{\|u_n+v_n\|,\|u_n-v_n\|\}\to2.
	\]
	Thus $\mathsf{J}_{B}(X)=2$, and Theorem \ref{thm:endpoint} yields $\mathsf{C}^{B}_t(X)=2$.
\end{proof}

\begin{remark}
	The preceding proposition is intentionally formulated as a usable criterion rather than a separate local theory.  In applications, almost square configurations often arise first, while exact Birkhoff orthogonality is obtained by minimizing the distance from a line.  This is precisely the mechanism behind the Papini--Baronti characterization of uniform non-squareness.
\end{remark}

\bmhead{Acknowledgements}
Thanks to all the members of the Functional Analysis Research Team at the School of Mathematics and Statistics, Anqing Normal University, for their valuable discussions and corrections regarding the
challenges and errors encountered in this article.

\section*{Declarations}

\begin{itemize}
	\item	Funding: This work received no funding support.
	\item Conflict of interest: The authors declare that there is no competing financial interest
	or personal relationship that could have appeared to influence the work reported in
	this paper.
	\item Data availability: This is a purely theoretical study without any experimental. All conclusions are obtained via mathematical analysis and proof.
	\item Author contribution:
	Junxiang Qi: Writing-original draft,  Theoretical derivation, Proofs.
	Zhouping Yin: Theoretical analysis, Writing-review \& editing.
	Qi Liu: Conceptualization, Supervision,Theoretical framework.

\end{itemize}

%%===========================================================================================%%
%% If you are submitting to one of the Nature Portfolio journals, using the eJP submission   %%
%% system, please include the references within the manuscript file itself. You may do this  %%
%% by copying the reference list from your .bbl file, paste it into the main manuscript .tex %%
%% file, and delete the associated \verb+\bibliography+ commands.                            %%
%%===========================================================================================%%


\begin{thebibliography}{99}
	
	\bibitem{Birkhoff1935}
	G. Birkhoff, Orthogonality in linear metric spaces, \emph{Duke Math. J.} \textbf{1} (1935), 169--172.
	
	\bibitem{James1947Inner}
	R. C. James, Inner products in normed linear spaces, \emph{Bull. Amer. Math. Soc.} \textbf{53} (1947), 559--566.
	
	\bibitem{James1964}
	R. C. James, Uniformly non-square Banach spaces, \emph{Ann. of Math.} \textbf{80} (1964), 542--550.
	
	\bibitem{Clarkson1936}
	J. A. Clarkson, Uniformly convex spaces, \emph{Trans. Amer. Math. Soc.} \textbf{40} (1936), 396--414.
	
	
	\bibitem{KatoMaligrandaTakahashi2001}
	Y. Fu, Y. Li, The relations between the von Neumann-Jordan type constant and some geometric properties of Banach spaces, \emph{Rev. Real Acad. Cienc. Exactas Fis. Nat. Ser. A-Mat.} \textbf{117} (2023), 25.
	
	\bibitem{Takahashi2006}
	Y. Takahashi, Some geometric constants of Banach spaces: a unified approach, in \emph{Banach and Function Spaces II}, Yokohama Publishers, 2008, 191--220.
	
	\bibitem{TakahashiKato2007}
	C. Yang, Jordan-von Neumann constant for Bana\'s-Fraczek space, \emph{Banach J. Math. Anal.} \textbf{8} (2014), no. 2, 185--192.
	
	\bibitem{BarontiCasiniPapini2000}
	D. Du, K. Lin, Y. Li, Another generalized angle related to norm derivatives in Banach spaces, \emph{Filomat} \textbf{39} (2025), no. 34, 12015--12029.
	
	
	\bibitem{PapiniBaronti2022}
	P. L. Papini and M. Baronti, Parameters in Banach spaces and orthogonality, \emph{Constr. Math. Anal.} \textbf{5} (2022), 37--45.
	
	
	
	
	
	\bibitem{DuLi2025}
	D. Du and Y. Li, Some moduli and inequalities related to Birkhoff orthogonality in Banach spaces, \emph{Aust. J. Math. Anal. Appl.} \textbf{20} (2023),2.
	
	
	\bibitem{DuLiangLi2025}
	D. Du, R. Liang and Y. Li, Some moduli of convexity and smoothness related to Birkhoff orthogonality in Banach spaces, \emph{Rev. R. Acad. Cienc. Exactas Fis. Nat. Ser. A Mat.} \textbf{119} (2025), 110.
	
	\bibitem{PalChandok2025}
	K. Pal and S. Chandok, Some inequalities related to Heinz mean constant with Birkhoff orthogonality, preprint, arXiv:2512.24675 (2025).
	
	\bibitem{Pisier2016}
	S. Wan, Q. Liu, M. Bao, Y. Wang, On $(\alpha,\beta,\gamma)$ von Neumann-Jordan type constants in Banach spaces, \emph{The Journal of Analysis} (2026).
	
	
	
	
	\bibitem{1}Figiel, T.: On the moduli of convexity and smoothness. Stud. Math. 56(2), 121-155 (1976)
	
	
	
	\bibitem{2}J. A. Clarkson, The von Neumann-Jordan constant for the Lebesgue spaces, Ann.
	of Math. (2), 38 (1937), 114-115.
	
	
	
	\bibitem{3}G. Zb\u{a}ganu, An inequality of M. R?dulescu and S. R?dulescu which characterizes the
	inner product spaces, Rev. Roum. Math. Pures Appl., 47 (2002), 253?257.
	
	\bibitem{4}P. Mart\'in and P. L. Papini, Moving around the sums of orthogonal unit vectors, \emph{Mathematical Inequalities \& Applications} \textbf{27} (2024), no. 2, 401--415, doi:10.7153/mia-2024-27-28.
	
	
	\bibitem{RaoLiu2026}
	Z. Y. Rao, Q. Wu, Q. Liu et al., The skew James type constant in Banach spaces, \emph{Anal Math} (2026), doi:10.1007/s10476-026-00166-0.
	
	
	
	
	\bibitem{HeRaoWangLiuLi2026}
	B. He, Z. Rao, Y. Wang, Q. Liu, Y. Li, On two geometric constants $\theta_I(X)$ and $\theta_B(X)$ in banach spaces: comparative analysis and applications, \emph{Indian J. Pure Appl. Math.} (2026).
	
	\bibitem{LiuWang2026}
	Q. Liu, Y. Wang, The Constant to Measure the Differences Between $p$-Angular Distance and Skew $p$-Angular Distance, \emph{Results Math.} \textbf{81} (2026), 147.
	
	
	\bibitem{AlonsoMartinPapini2026}
	J. Alonso, P. Mart\'in, P. L. Papini, Wheeling around Chebyshev centers and Jung constant in normed planes, \emph{Rendiconti del Circolo Matematico di Palermo Series 2} \textbf{75} (2026), 131.
	
	\bibitem{IfronikaGunawanLindiarni2026}
	Ifronika, H. Gunawan, J. Lindiarni, Geometric constants for mixed Morrey spaces, \emph{Czechoslovak Mathematical Journal} (2026), 12 pp.
	
	
\end{thebibliography}
\end{document}